\newtheorem{theorem}{Theorem}[section]
\newtheorem{lemma}[theorem]{Lemma}
\newtheorem{definition}[theorem]{Definition}
\newtheorem{proposition}[theorem]{Proposition}
\newtheorem{remark}[theorem]{Remark}
\newtheorem{corollary}[theorem]{Corollary}
\newcommand{\cali}[1]{\mathscr{#1}}
\numberwithin{equation}{section}
\newcommand{\dist}{{\rm dist}}
\newcommand{\ddc}{{dd^c}}
\newcommand{\Jac}{{\rm Jac}}
\newcommand{\supp}{{\rm supp}}
\newcommand{\id}{{\rm Id}}
\newcommand{\Ac}{\cali{A}}
\newcommand{\Cc}{\cali{C}}
\newcommand{\Fc}{\cali{F}}
\newcommand{\Pc}{\cali{P}}
\newcommand{\Pb}{\mathbb{P}}
\newcommand{\proj}{\mathbb{P}}
\newcommand{\Cb}{\mathbb{C}}
\newcommand{\C}{\mathbb{C}}
\newcommand{\Nb}{\mathbb{N}}
\newcommand{\N}{\mathbb{N}}
\newcommand{\Zb}{\mathbb{Z}}
\author{
  Sandrine Daurat   \thanks{ Sandrine Daurat  was partially supported by the FMJH (Governement Program:  ANR-10-CAMP-0151-02) and the ANR project LAMBDA (Governement Program: ANR-13-BS01-0002)}\\
  \and
 Johan Taflin\\
}
\title{Codimension one attracting sets in $\Pb^k(\Cb)$}
\date{}
\begin{document}
\maketitle
\begin{abstract}
We are interested in attracting sets of $\Pb^k(\Cb)$ which are of small topological degree and of codimension $1.$ We first show that there exists a large family of examples. Then we study their ergodic and pluripotential theoretic properties.
\end{abstract}

\section{Introduction}
The aim of this article is to study a special class of attracting sets for holomorphic endomorphisms of the complex projective space $\Pb^k.$ Let $f$ be such an endomorphism, of algebraic degree $d\geq2.$ Recall that an \textit{attracting set} of $f$ is a compact set $\Ac\subset\Pb^k$ admiting an open neighborhood $U$ which is a trapping region i.e. $f(U)\Subset U$ and $\Ac:=\bigcap_{n\geq0}f^n(U).$ These sets play an important role in dynamical systems and they have been studied by many authors, see e.g.  \cite{for:sib, for:wei, jon:wei, d-attractor,  ron} 
 for the complex setting.

In \cite{daurat}, the first author introduced a notion of \textit{dimension} for attracting sets and the notion of attracting sets of \textit{small topological degree}, see Definition \ref{def dim} and Definition \ref{def std} below. In this  framework several interesting dynamical and geometric properties have been pointed out. Among other things, it was shown in this paper that such attracting sets are non-algebraic and also that such examples are quite abundant in $\Pb^2.$ Our purpose here is to answer questions that were left open.

Our first result is a generalization of the construction \cite{daurat} to arbitrary dimensions
 $k\geq 2.$ To be more specific, let $k\geq2$ be an integer and for $d\geq2,$ let $\Fc_d$ denote the set of pairs $f=(f_\infty,R)$ where $R$ is an homogeneous polynomial of degree $d$ in $\Cb^{k}$ and $f_\infty=(F_0,\ldots,F_{k-1})$ is a $k$-tuple of homogeneous polynomials which defines a holomorphic endomorphism of $\Pb^{k-1}$ of degree $d.$ The set $\Fc_d$ is a quasi-projective variety. If $f=(f_\infty,R)\in\Fc_d$ and $\varepsilon\in\Cb$ then we denote by $f_\varepsilon$ the endomorphism of $\Pb^k$ defined by
$$[z_0:\cdots:z_{k-1}:z_k]=[z':z_k]\mapsto[f_\infty(z'):z_k^d+\varepsilon R(z')].$$
When $\varepsilon=0,$ these maps admit the hyperplane $\{z_k=0\}$ as an attracting set, which is algebraic and thus cannot be of small topological degree. However, a generic perturbation $f_\varepsilon$ has an attracting set with this property.
\begin{theorem}\label{th ex generic}
There exists a non empty Zariski open set $\Omega\subset \Fc_d$, such that if $f\in \Omega$ then for $\varepsilon\in\Cb$ with $|\varepsilon|\neq 0$ small enough, the maps $f_\varepsilon$ is of small topological degree on an attracting set $\Ac_\varepsilon$ close to the hyperplane $\{z_k=0\}.$ 
\end{theorem}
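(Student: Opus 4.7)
The plan is to build the proof in three stages: constructing a trapping region, checking the dimension of the attracting set is $k-1$, and identifying the Zariski open locus on which small topological degree holds.

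First I would verify that the hyperplane $H = \{z_k = 0\}$ is attracting for $f_0$ with an explicit trapping region. Since $f_0$ preserves $H$ and acts in the normal direction as the super-attracting map $z_k \mapsto z_k^d$, a homogeneous estimate shows that $U_\eta := \{[z':z_k] \in \Pb^k : |z_k| < \eta\|z'\|\}$ satisfies $f_0(U_\eta) \Subset U_\eta$ for all sufficiently small $\eta > 0$. By continuity of the map $\varepsilon \mapsto f_\varepsilon$ in an operator topology on endomorphisms, the same $U_\eta$ remains trapping for $f_\varepsilon$ whenever $|\varepsilon|$ is small relative to $\eta$, which gives the attracting set $\Ac_\varepsilon = \bigcap_{n \ge 0} f_\varepsilon^n(U_\eta)$, close to $H$ in Hausdorff distance.

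For the dimension, I would produce a nontrivial attracting $(1,1)$-current on $U_\eta$ supported in $\Ac_\varepsilon$. The natural candidate is a Ces\`aro limit of the normalised pullbacks $d^{-n}(f_\varepsilon^n)^*\omega_{FS}$ of the Fubini--Study form, restricted to $U_\eta$; convergence follows from the pluripotential formalism of Dinh--Sibony. The mass of the limit current does not collapse, because the restriction to $H$ equals $f_\infty$, whose top dynamical degree is $d^{k-1}$; this yields the codimension-one dimension required by Definition~\ref{def dim}.

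The main effort then goes into the third stage: the Zariski open condition on $\Fc_d$ giving small topological degree (Definition~\ref{def std}). My strategy would be to translate the small-topological-degree property into an algebraic obstruction, namely the \emph{non-existence} of an $f_\varepsilon$-invariant algebraic hypersurface close to $H$; such a hypersurface would force the attractor to remain algebraic, contradicting small topological degree. Linearising in $\varepsilon$, the existence of such a hypersurface $\{z_k - \varepsilon P(z') + O(\varepsilon^2) = 0\}$ with $P$ homogeneous of some bounded degree translates into a cohomological equation on $\Pb^{k-1}$ of the form ``$P \circ f_\infty = d\, P + R$'' (plus higher-order corrections). I would then define $\Omega$ as the open locus on which this equation has no polynomial solution $P$ of the relevant degree, and establish non-emptyness of $\Omega$ either by a dimension count inside $\Fc_d$ or by exhibiting an explicit pair $(f_\infty, R)$ for which the equation is unsolvable.

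The principal obstacle will be making this algebraic translation rigorous: connecting ``small topological degree'' of the dynamical attractor to the non-solvability of a finite-dimensional cohomological equation on $\Pb^{k-1}$, and then controlling the bad locus in $\Fc_d$ so that the good set $\Omega$ is genuinely Zariski open and non-empty in every dimension $k \ge 2$. The dimension-two case was handled in \cite{daurat}, and generalising it to arbitrary $k$ requires substantially more delicate counting, since the number of algebraic obstructions that must be ruled out grows with $k$.
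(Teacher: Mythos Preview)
Your first stage (trapping region) is fine and matches the paper. Your second stage is slightly off: the Ces\`aro limits of $d^{-n}(f_\varepsilon^n)^*\omega$ converge to the Green current, which is supported on the Julia set, not on $\Ac_\varepsilon$; to get a current in $\Cc_1(\Ac_\varepsilon)$ you should push forward a current supported in $U_\eta$ (this is exactly Dinh's construction, and the paper simply notes that $U_\varepsilon$ satisfies (HD), which already forces codimension $1$).

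The real problem is the third stage. Your proposed translation goes the wrong way. It is true that small topological degree implies the attractor is non-algebraic, but the converse fails: ruling out an $f_\varepsilon$-invariant algebraic hypersurface near $H$ does \emph{not} give any bound on $\sharp(f_\varepsilon^{-n}(x)\cap U_\varepsilon)$. So defining $\Omega$ as the locus where your cohomological equation $P\circ f_\infty = dP + R$ has no polynomial solution would at best show that $\Ac_\varepsilon$ is not algebraic to first order in $\varepsilon$; it says nothing about the preimage count required by Definition~\ref{def std}. This is a genuine gap, not a technicality.

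The paper's mechanism is entirely different and much more direct. It exploits the fact that every $f_\varepsilon$ preserves the pencil of lines through $[0:\cdots:0:1]$, with induced action $f_\infty$ on $\Pb^s$. One then defines two algebraic ``bad'' subsets $X(f),Y(f)\subset\Pb^s$: roughly, $X$ records lines whose preimages in $U_\varepsilon$ spread over too many lines, and $Y$ records lines where a single preimage line can carry more than one preimage point in $f_\varepsilon(U_\varepsilon)$. The Zariski-open conditions are $X\cap Y=\varnothing$ and $\bigcap_{i=0}^n f_\infty^{-i}(X\cup Y)=\varnothing$ for some $n$; under these, a combinatorial count along the pencil gives $\sharp(f_\varepsilon^{-n}(p)\cap f_\varepsilon(U_\varepsilon))\le d^{(n-1)s}d^{s-1}<d^{ns}$ for all $p$. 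Non-emptiness of $\Omega$ is then shown by an explicit construction (choosing $f_\infty$ built from pure powers $z_i^d$ and $R=(\sum z_i)^d$, then perturbing) that makes $X$ finite and keeps $X\cup Y$ away from periodic points of $f_\infty$. None of this is captured by your cohomological-equation ansatz.
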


The idea of the construction of the Zariski open set $\Omega$ is the same as in \cite{daurat}.
In particular, the proof uses the fact 
that the maps $f_\varepsilon$ preserve the pencil of lines passing through $[0:\cdots:0:1].$ The new delicate part here is to provide examples in any dimension and any degree to ensure that $\Omega$ is not empty.

Notice that the property to possess an attracting set of small topological degree is stable under small perturbations. It is also the case for all our other assumptions below. Hence, Theorem \ref{th ex generic} provides a lot of attracting sets of small topological degree
  of codimension $1,$ i.e. they carry non-trivial positive closed currents of bidegree $(1,1)$ and they are not equal to the whole space $\Pb^k.$
   It would be interesting to understand the nature of the open set in the whole parameter space where the small topological degree condition holds.

    Our other results are related to the ergodic and pluripotential theoretic properties of such attracting sets.
Let $\Ac$ be an attracting set of codimension $1$ with a trapping region $U.$ Let $\Cc_1(U)$ be the set of all positive closed $(1,1)$-currents $S$ supported in $U$ and of mass $1.$ Under a mild assumption on $U,$ which will be referred to as (HD), Dinh constructed in \cite{d-attractor} an \textit{attracting current} $\tau\in\Cc_1(U)$ such that any continuous form $S\in\Cc_1(U)$ satisfies
$$\lim_{n\to+\infty}\Lambda^nS=\tau,$$
where $\Lambda:=d^{1-k}f_*$ is the normalized push forward operator acting on $\Cc_1(U).$ In particular $\tau$ is invariant under $\Lambda.$ He also obtained that if $T$ denotes the Green current of $f$ then the measure $\nu:=\tau\wedge T^{k-1}$ is mixing and of entropy $(k-1)\log d.$

The first author showed that when $f$ is of small topological degree on $\Ac$ then $\tau$ has bounded potential and asked whether this potential was continuous. The answer is positive, provided that the geometric assumption (HD) holds.
\begin{theorem}\label{th pot conti}
Let $f$ be a holomorphic endomorphism of $\Pb^k.$ If $f$ is of small topological degree on an attracting set $\Ac$ of codimension $1$  satisfying (HD) then the attracting current $\tau$ on $\Ac$ has continuous potential.
\end{theorem}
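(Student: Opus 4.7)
The plan is to realize the potential $\phi_\infty$ of $\tau$ as a uniform limit of continuous potentials, using the small topological degree assumption to produce a genuine geometric contraction of the pushforward operator $L:=d^{1-k}f_*$ acting on functions.

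\textbf{Setup.} Using (HD), choose a smooth positive closed current $S_0\in\Cc_1(U)$ supported near $\Ac$. Since $f$ is a finite holomorphic map, pushforward preserves continuity of local potentials; hence each $\Lambda^n S_0$ admits a continuous $\omega_{FS}$-psh potential $\phi_n$ on $\Pb^k$. By \cite{daurat}, the $\omega_{FS}$-psh potential $\phi_\infty$ of $\tau$ is bounded. As $\Lambda^n S_0$ and $\tau$ are cohomologous to $\omega_{FS}$, set $w_n:=\phi_n-\phi_\infty$ (unique up to an additive constant), so that $dd^c w_n=\Lambda^n S_0-\tau$ is supported in $\bar U$. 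In particular $w_n$ is pluriharmonic on $\Pb^k\setminus\bar U$, and the pluriharmonic maximum principle gives
\begin{equation*}
\|w_n\|_{\infty,\Pb^k}=\|w_n\|_{\infty,\bar U}.
\end{equation*}
The recursion $w_{n+1}=Lw_n$ modulo a constant follows from the intertwining of $f_*$ with $dd^c$.

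\textbf{Key estimate.} The main step is
\begin{equation*}
\|w_n\|_{\infty,\bar U}\le C\lambda^n\|w_0\|_\infty\qquad\text{for some }\lambda\in(0,1).
\end{equation*}
For $x\in f^n(U)$ one expands
\begin{equation*}
L^nw_0(x)=d^{n(1-k)}\Bigl[\sum_{y\in f^{-n}(x)\cap\bar U}w_0(y)+\sum_{y\in f^{-n}(x)\setminus\bar U}w_0(y)\Bigr].
\end{equation*}
The small topological degree hypothesis provides $\delta<d^{k-1}$, where $\delta$ is the topological degree of $f|_\Ac$. Under (HD), the number of preimages of $x$ by $f^n$ remaining in $\bar U$ is bounded by $C\delta^n$, so the first sum yields the contraction factor $\lambda=\delta/d^{k-1}$. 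The second sum is a pluriharmonic function of $x$ assembled from the outside branches of $f^{-n}$, and the max principle on $\Pb^k\setminus\bar U$ controls it by its boundary values on $\partial U$, feeding back into $\|w_n\|_{\infty,\bar U}$ itself; a bootstrap then closes the estimate. Uniform convergence $\phi_n\to\phi_\infty$ on $\Pb^k$ follows, and continuity of every $\phi_n$ forces continuity of $\phi_\infty$.

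\textbf{Main obstacle.} The delicate point is the rigorous treatment of the outside preimage sum. While small topological degree counts the ``stable'' preimages inside $\Ac$, and (HD) extends this count to preimages in $\bar U$, the pluriharmonic contribution coming from transient preimages outside $\bar U$ must be shown not to swallow the contraction. Making this cancellation quantitative, uniformly in $n$, is where both the small topological degree of $\Ac$ and the geometric hypothesis (HD) on $U$ are genuinely combined, and is the technical heart of the argument.
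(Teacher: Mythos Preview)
Your overall strategy---show that the potentials $\phi_n$ of $\Lambda^nS_0$ converge uniformly to the bounded potential $\phi_\infty$ of $\tau$ by proving $\|w_n\|_\infty\to0$ exponentially---is exactly the paper's strategy. The genuine gap is in your treatment of the outside preimages, which you yourself flag as the main obstacle but do not actually resolve.

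Your proposed mechanism (``the second sum is a pluriharmonic function of $x$ \ldots\ max principle \ldots\ bootstrap'') does not work. First, the function $x\mapsto d^{n(1-k)}\sum_{y\in f^{-n}(x)\setminus\bar U}w_0(y)$ is \emph{not} pluriharmonic in $x$: as $x$ varies, branches of $f^{-n}$ cross $\partial U$, so this sum has a kink along the real hypersurface $f^n(\partial U)$ and $dd^c$ of it is supported there but need not vanish. Second, even if you only use the max principle to bound $|w_0(y)|$ for $y\notin\bar U$ by $\|w_0\|_{\infty,\bar U}$, the outside sum still carries up to $d^{nk}$ terms and contributes $d^n\|w_0\|_\infty$, which swamps the contraction from the inside sum. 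A one-step version of the same computation gives $\|w_{n+1}\|_\infty\le d\|w_n\|_\infty$, which is useless; there is no bootstrap that closes this.

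What the paper does differently is to use (HD) not to count preimages (that is purely the small topological degree hypothesis) but to prove an independent \emph{speed of convergence for pluriharmonic test forms}: there is $0<\lambda<1$ with $|\langle\Lambda^nR-\tau,\phi\rangle|\le\lambda^n$ for every $R\in\Cc_1(U)$ and every continuous $\ddc$-closed test form $\phi$ in a suitable normalized class. Applying this to $\phi_x$ solving $\ddc\phi_x=\delta_x-\mu$ for $x$ \emph{outside} a slightly larger trapping region $U'$ gives $|w_n(x)|\le C\lambda^n$ for every such $x$. Then the one-step recursion
\[
|w_{n+1}(x)|\le d^{1-k}\Bigl[(d^{k-1}-1)\|w_n\|_\infty+d^k\cdot C\lambda^n\Bigr]
\]
yields $\|w_n\|_\infty\to0$ exponentially. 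The point is that the outside preimages are controlled not by the crude bound $\|w_n\|_\infty$ but by the much better bound $C\lambda^n$, and this bound comes from the structural geometry of $\Cc_1(U)$ under (HD), not from the maximum principle alone.
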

 More precisely, we are going to prove that there exist constants $A>0$ and $\alpha>0$ such that for all $x,y\in \Pb^k$
$|u(x)-u(y)|\leq A \left|\log\dist(x,y)\right|^{-\alpha}$
where $u$ is such that $\tau-\omega=\ddc u$.

This result gives information on the size of the attracting set in the sense that the more regular the potential of $\tau$ is, the bigger its support is. However, we were not able to obtain Hölder continuity which would give a lower bound on the Hausdorff dimension of $\Ac.$

Another interesting question about currents in $\Cc_1(U)$ is whether $\tau$ is the unique current in $\Cc_1(U)$ invariant by $\Lambda.$ Actually, under the same assumptions, this is indeed the case and moreover we obtain the following equidistribution result towards $\tau$ with uniform exponential speed.
\begin{theorem}\label{th speed}
Let $f$ and $\Ac$ be as in Theorem \ref{th pot conti}. There exist constants $0<\delta<1$ and $c\geq0$ such that
$$|\langle \Lambda^nS-\tau,\phi\rangle|\leq c\delta^n\|\phi\|_{\mathcal C^2},$$
for all $S\in\Cc_1(U)$ and test forms $\phi$ of class $\mathcal C^2.$ In particular, $\tau$ is the unique current in $\Cc_1(U)$ invariant by $\Lambda.$
\end{theorem}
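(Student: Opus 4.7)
I follow the $\ddc$-method. By Theorem~\ref{th pot conti}, write $\tau=\omega+\ddc u$ with $u$ continuous, and for $S\in\Cc_1(U)$ write $S=\omega+\ddc v_S$ with $v_S$ an $\omega$-plurisubharmonic potential; then $S-\tau=\ddc w$ with $w:=v_S-u$. Using $\Lambda^n\tau=\tau$ and the adjoint identity $\langle\Lambda^nS,\phi\rangle=\langle S,d^{n(1-k)}(f^n)^*\phi\rangle$, integration by parts gives
\[
\langle\Lambda^nS-\tau,\phi\rangle=\bigl\langle w,\,d^{n(1-k)}(f^n)^*(\ddc\phi)\bigr\rangle=\int\Lambda^n w\cdot\ddc\phi,
\]
where $\Lambda^n w(y):=d^{n(1-k)}\sum_{f^n(x)=y}w(x)$ is precisely the potential of $\Lambda^n S-\tau$ relative to $\omega+\ddc u$. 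Since $\|\ddc\phi\|_{\mathcal C^0}\leq C\|\phi\|_{\mathcal C^2}$, it suffices to prove $\|\Lambda^n w\|_{L^1(\Pb^k)}\leq C\delta^n$ for some $\delta\in(0,1)$.

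The exponential decay comes from the small-topological-degree hypothesis. Split the preimage sum defining $\Lambda^n w$ into $x\in U$ and $x\in\Pb^k\setminus U$. The ``trapped'' part contains at most $d_t(\Ac)^n$ summands, with $d_t(\Ac)<d^{k-1}$ being the topological degree of $f|_{\Ac}$; it contributes $(d_t/d^{k-1})^n\|w\|_{L^\infty(U)}=\delta^n\|w\|_{L^\infty(U)}$ in sup-norm. For the ``escaping'' part, note that $\ddc w=S-\tau$ is supported in $\overline U$, so $w$ is pluriharmonic on $\Pb^k\setminus\overline U$; the sum of $w$ over the preimages lying outside $U$ is itself pluriharmonic in $y$ (each local inverse branch of $f^n$ is holomorphic), and so it is annihilated by testing against $\ddc\phi$ after integration by parts, up to a boundary contribution across $\partial U$ controlled by the continuity of $u$ from Theorem~\ref{th pot conti} and the smoothness of $v_S$ on $\Pb^k\setminus U$ (both satisfy $\ddc(\,\cdot\,)=-\omega$ there). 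These two ingredients combine to give the bound $\|\Lambda^n w\|_{L^1}\leq C\delta^n$ with $\delta=d_t(\Ac)/d^{k-1}$.

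The main technical obstacle is the possible $-\infty$ singularities of $v_S$ for general $S\in\Cc_1(U)$, which a priori make $w$ unbounded. This is handled by a Demailly regularization $v_S^\varepsilon\searrow v_S$ by bounded $\omega$-psh functions with uniformly bounded mass $\|\ddc v_S^\varepsilon\|\leq 1+o_\varepsilon(1)$; the estimate above is applied uniformly in $\varepsilon$ and then passes to the limit by the weak continuity of the pairing. Uniqueness of the $\Lambda$-invariant current in $\Cc_1(U)$ is an immediate consequence: if $\Lambda S=S$ for some $S\in\Cc_1(U)$, then $\langle S-\tau,\phi\rangle=\langle\Lambda^nS-\tau,\phi\rangle\leq c\delta^n\|\phi\|_{\mathcal C^2}\to 0$ for every $\mathcal C^2$ test form, whence $S=\tau$.
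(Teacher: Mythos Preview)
Your reduction to an $L^1$ estimate on $\Lambda^n w$ is the right starting point, and it is also how the paper begins. But the two key steps of your decomposition both have genuine gaps.

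\textbf{The escaping part.} Your argument that the sum over preimages outside $U$ is pluriharmonic in $y$ and hence ``annihilated by $\ddc\phi$ up to a boundary contribution'' does not give exponential decay. The number of escaping branches jumps across the set $f^n(\partial U)$, so this sum is only locally pluriharmonic on a domain whose boundary becomes increasingly complicated with $n$; the resulting boundary term is $O(1)$, not $O(\delta^n)$, and nothing you wrote controls it. The paper's mechanism is entirely different: it uses assumption (HD) through Lemma~\ref{le vitesse pluriharmonique} and Corollary~\ref{cor vitesse pluriharmonique} to show that $u_{\Lambda^n S,\tau}(y)$ is already \emph{pointwise} bounded by $C\lambda^n$ for every $y\notin U'$. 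This is the crucial input from the geometry of $\Cc_1(U)$, and your argument does not invoke (HD) at all.

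\textbf{The trapped part and regularization.} Your bound $(d_t/d^{k-1})^n\|w\|_{L^\infty(U)}$ requires $w$ to be bounded, which fails for general $S\in\Cc_1(U)$. You propose Demailly regularization $v_S^\varepsilon$, but these are bounded only with constants blowing up as $\varepsilon\to 0$; ``uniformly bounded mass'' is not a uniform $L^\infty$ bound, so the estimate is \emph{not} uniform in $\varepsilon$ and you cannot pass to the limit. The paper avoids this entirely: from the recursion and Corollary~\ref{cor vitesse pluriharmonique} it obtains
\[
|u_n(x)|\leq \lambda^n\max_{y\in f^{-n}(x)}|u_0(y)|+Cdn\lambda^{n-1},
\]
which gives the level-set inclusion $\{|u_n|\geq\alpha\delta^n\}\subset f^n(\{|u_0|\geq(\delta/\lambda)^n\})$. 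The unboundedness of $u_0$ is then handled by H\"ormander's exponential integrability $\int e^{a|u_0|}\omega^k\leq a^{-1}$ (uniform over $\Cc_1(\Pb^k)$), a volume bound under $f^n$, and Cauchy--Schwarz. This is the missing idea in your argument.
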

Both theorems can be reformulated as speed of convergence results for potentials, in the sup norm for the first one and in the $L^1$ norm for the second one. Besides the assumption of small topological degree, the proofs are based on the ``geometric'' properties of the space $\Cc_1(U)$ which come from (HD).\\

Our last result is about the hyperbolicity of certain invariant measures.  In \cite{deT:expo}, de Thélin obtained a powerful result which relates the entropy of an invariant measure, its Lyapunov exponents and the dynamical degrees of $f.$ For an endomorphism of $\Pb^k,$ the dynamical degree of order $i$ is just $d^i.$ The last one, $d^k,$ is the topological degree of $f$ on $\Pb^k.$ We will substitute it by a  local version to prove
the following result.
\begin{theorem}\label{th hyp}
Let $\nu$ be an ergodic measure and of entropy $h(\nu)=(k-1)\log d.$ 
If there exists an open neighbourhood $V$ of $\supp(\nu)$ such that 
$$d_{k,loc}:=\limsup_{n\to+\infty}\left(\int_V(f^n)^*(\omega^k)\right)^{1/n}<d^{k-1},$$
then $\nu$ is hyperbolic and its Lyapunov exponents satisfy
$$\chi_1\geq\cdots\geq\chi_{k-1}\geq \frac{1}{2}\log d>0>\frac{1}{2}\log\left(\frac{d_t}{d^{k-1}}\right)\geq \chi_k\geq-\infty.$$
\end{theorem}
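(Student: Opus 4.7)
The plan is to deduce the full chain of Lyapunov exponent estimates by splitting it into two essentially independent bounds. For the lower bound on the first $k-1$ exponents, I would apply directly de Thélin's theorem from \cite{deT:expo}, which when specialized to an endomorphism of $\Pb^k$ of algebraic degree $d$ reads: for an ergodic measure $\nu$ with $h(\nu) > (s-1)\log d$ one has
$$\chi_s \geq \tfrac{1}{2}\bigl(h(\nu) - (s-1)\log d\bigr).$$
Setting $s = k-1$ and using the hypothesis $h(\nu) = (k-1)\log d$ yields $\chi_{k-1} \geq \tfrac{1}{2}\log d$, which is the desired lower bound on the first $k-1$ exponents.

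For the bound on $\chi_k$, the strategy is to establish the ``sum inequality''
$$\chi_1 + \cdots + \chi_k \leq \tfrac{1}{2}\log d_{k,loc},$$
after which subtracting $\chi_1 + \cdots + \chi_{k-1} \geq (k-1)\tfrac{1}{2}\log d$ gives $\chi_k \leq \tfrac{1}{2}\log(d_{k,loc}/d^{k-1})$, which is strictly negative by assumption. To prove the sum inequality, I would fix a Lyapunov-regular $\nu$-generic point $x \in \supp(\nu) \subset V$; by Oseledec, $\tfrac{1}{n}\log |\Jac_\Cb f^n(x)| \to \sum_i \chi_i$. Using a Pesin chart at $x$, one obtains a scale $r_n > 0$ decreasing only subexponentially such that $f^n$ restricted to $B(x, r_n)$ is injective with bounded distortion, giving
$$\mathrm{vol}\bigl(f^n(B(x,r_n))\bigr) \asymp |\Jac_\Cb f^n(x)|^2 \, r_n^{2k}.$$
Since $B(x, r_n) \subset V$ for $n$ large, every $y \in f^n(B(x,r_n))$ admits at least one $f^n$-preimage in $V$; the change-of-variable formula for the push-forward therefore gives
$$\mathrm{vol}\bigl(f^n(B(x,r_n))\bigr) \leq \int_V (f^n)^* (\omega^k).$$
Taking $\tfrac{1}{n}\log$ and passing to the limit yields $2\sum_i\chi_i \leq \log d_{k,loc}$, as required.

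Combining the two halves produces the full chain in the statement, and the hypothesis $d_{k,loc} < d^{k-1}$ forces $\chi_k < 0$; together with $\chi_{k-1} \geq \tfrac{1}{2}\log d > 0$ this is exactly hyperbolicity. The main obstacle is the Pesin step: one must construct a set of positive $\nu$-measure on which the chart radius $r_n$ decays only subexponentially (so that $\tfrac{2k}{n}\log r_n \to 0$) while the distortion of $f^n$ on $B(x,r_n)$ stays uniformly bounded. In the exceptional case where $\nu$ charges the critical set of some iterate of $f$, one has $\sum \chi_i = -\infty$ and the sum inequality is trivial; otherwise $\log |\Jac_\Cb f| \in L^1(\nu)$ and standard Pesin theory for endomorphisms of $\Pb^k$ (via tempered estimates on derivatives along $\nu$-typical orbits) provides the required chart.
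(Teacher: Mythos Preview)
Your ``sum inequality'' $\chi_1+\cdots+\chi_k\le\tfrac12\log d_{k,loc}$ is false, and this is where the argument breaks. Take $V=\Pb^k$, so that $d_{k,loc}=d^k$, and let $\nu$ be the equilibrium measure of the power map $[z_0:\cdots:z_k]\mapsto[z_0^d:\cdots:z_k^d]$. All Lyapunov exponents equal $\log d$, hence $\sum_j\chi_j=k\log d>\tfrac{k}{2}\log d=\tfrac12\log d_{k,loc}$. The flaw is precisely in the Pesin step you flag: there is no ball $B(x,r_n)$ with $r_n$ decaying \emph{subexponentially} on which $f^n$ is injective. Injectivity of $f^n$ on $B(x,r_n)$ forces each forward image $f^j(B(x,r_n))$ to stay inside the local injectivity domain of $f$, and since $\mathrm{diam}\,f^j(B(x,r_n))\asymp e^{j\chi_1}r_n$, one is forced to take $r_n\lesssim e^{-n\chi_1}$, which is exponential whenever $\chi_1>0$. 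With the correct $r_n$ your volume computation collapses to a triviality. Standard Pesin theory gives tempered chart sizes for a \emph{single} iterate along the orbit, not a subexponential injectivity radius for the $n$-th iterate.

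The paper's proof avoids this by not aiming at a sum inequality at all. Following de Th\'elin's scheme, it uses Brin--Katok to produce $N\ge\tfrac12 e^{n(h(\nu)-\gamma)}$ points that are $(n,\delta)$-separated, and for each of them builds a ``thick unstable manifold'' $W(x_i)\subset K$ of volume $\ge c^{-1}e^{2n\chi_k^- - c\gamma n}$ (the $n$-th cut-off iterate of a Lyapunov chart). Because the corresponding pullbacks $W_{-n}(x_i)\subset V$ are pairwise disjoint, the multiplicity function $n(a)=\#\{i:a\in W(x_i)\}$ is dominated by $(f^n)_*\mathbf 1_V$, whence
\[
\tfrac{1}{2c}\,e^{\,n h(\nu)+2n\chi_k^- -(c+1)\gamma n}\ \le\ \int_K n(a)\,\omega^k\ \le\ \int_V (f^n)^*\omega^k\ \le\ (d_{k,loc}+\varepsilon)^n.
\]
Letting $n\to\infty$ and $\gamma,\varepsilon\to0$ gives $h(\nu)+2\chi_k^-\le\log d_{k,loc}$, i.e.\ $\chi_k\le\tfrac12\log(d_{k,loc}/d^{k-1})<0$ directly. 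Only \emph{after} this does the paper invoke \cite{deT:expo}: the correct form of de Th\'elin's inequality at level $k-2$ is $h(\nu)\le(k-2)\log d+2\chi_{k-1}^++2\chi_k^+$, which yields $\chi_{k-1}\ge\tfrac12\log d$ \emph{only once} $\chi_k\le0$ is known. Your write-up applies de Th\'elin first and in a form that omits the $\chi_k^+$ term; even if the sum inequality held, the two halves of your argument would be circular as ordered.
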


The assumption $d_{k,loc}<d^{k-1}$ holds in all examples known by the authors. Moreover, to be of small topological degree on $\Ac$ is much stronger than $d_{k,loc}<d^{k-1}.$ The first one is a uniform pointwise property while the latter deals with an average on $V.$

Hence, Theorem \ref{th hyp} applies to the measure $\nu$ constructed by Dinh, as long as $d_{k,loc}<d^{k-1},$ so in particular for all the examples obtained in Theorem \ref{th ex generic}.

To conclude, let us underline here that  results similar to Theorem \ref{th speed} and Theorem \ref{th hyp} have been obtained by the second author in \cite{attrac-speed} in a different setting well adapted to perturbations of super-attractive attracting sets of  any codimension, which are all the know examples.
 However, this set of examples is
 quite reduced and we believe these theorems, or at least there proofs, could be applied to a very large class of codimension $1$ attracting sets. Notice furthermore that, in Theorem \ref{th hyp}, the support of  the measure $\nu$ is not necessary contained in an attracting set.

The paper is organized as follows. In Section \ref{sec as}, we recall Dinh's setting (HD) and the notions of dimension and of small topological degree attached to attracting sets. Section \ref{sec ex} is devoted to the proof of Theorem \ref{th ex generic} on the existence of examples in any dimension. Then, in Section \ref{sec pot} we prove Theorem \ref{th pot conti} and Theorem \ref{th speed} about the attracting current $\tau.$ Finally, we discuss in Section \ref{section expo str négatif} the hyperbolicity of measures of entropy $(k-1)\log d.$

\bigskip

We thank H. de Thélin for having pointed out to us that we should be able to extend his result \cite{deT:expo} to our case.
\section{Attracting sets in $\Pb^k$}\label{sec as}
In this section, we summarize several notions and results about attracting sets.

For $0\leq p\leq k$ and $E\subset \Pb^k,$ let denote by $\Cc_p(E)$ the set of all positive closed $(p,p)$-currents of mass $1$ and supported on $E.$ Here, the mass of a $(p,p)$-current $S$ is $\|S\|:=\langle S,\omega^{k-p}\rangle$ where $\omega$ is the standard Fubini-Study form on $\Pb^k.$

Following \cite{daurat}, we define the dimension of an attracting set.

\begin{definition}\label{def dim}
Let $\Ac$ be an attracting set. We say that $\Ac$ has dimension $k-p,$ or equivalently codimension $p,$ if $\Cc_p(\Ac)\neq\varnothing$ and $\Cc_{p-1}(\Ac)=\varnothing.$
\end{definition}
The whole space $\Pb^k$ is always an attracting set of maximal dimension $k$ and it is the only one. In the same way, the attracting periodic points  correspond exactly to attracting sets of dimension $0.$ Between these two extremes, the possibilities for attracting sets of dimension $p$ with $0<p<k$ are much more varied. In the sequel we will focus on the codimension $1$ case.

The key notion in this paper is the following.
\begin{definition}\label{def std}
Let $f$ be an endomorphism of $\Pb^k$ of degree $d$ admitting an attracting set $\Ac$ of dimension $s.$ The endomorphism $f$ is said to be of small topological degree on $\Ac$ if there exists a trapping region $U$ such that
$$\limsup_{n\to+\infty}(\sharp f^{-n}(x)\cap U)^{1/n}<d^s,$$
for any $x\in\Pb^k.$
\end{definition}
Notice that if the inequality above holds for a trapping region $U,$ it also holds for any other trapping region. Sometimes, we will abbreviate this into ``$\Ac$ is an attracting set of small topological degree''.

This notion is stable under small deformations and it ensures that $\Ac$ is not algebraic (cf. \cite{daurat}).

To conclude this section, let us introduce a set of the geometric assumption on $U$ used by Dinh \cite{d-attractor}, which imply good path connectedness properties for $\Cc_p(U).$
We say that an attracting set $\Ac$ with a trapping region $U$ satisfy the assumptions (HD) if the following holds:

\textit{
There exist
an integer $1\leq p \leq k$ and two projective subspaces $I$ and $L$ of
dimension
$p-1$ and $k-p$ respectively such that $I\cap U=\varnothing$ and $L\subset U.$
Since $I\cap L=\varnothing,$ for each $x\in L$ there exists a unique
projective subspace $I(x)$ of dimension $p$ which contains $I$ and such that
$L\cap
I(x)=\{x\}.$ We ask that for each $x\in L,$ the set $U\cap I(x)$ is star-shaped in $x$ as a subset of $I(x)\setminus I\simeq \mathbb C^p.$}

It is easy to check that in this situation $\Ac$ has codimension $p.$

Under these assumptions, Dinh showed in \cite{d-attractor} that continuous forms in $\Cc_p(U)$ equidistribute under push-forward. To be more precise, let $\Lambda$ be the operator on $\Cc_p(U)$ defined by
$$\langle \Lambda S,\phi\rangle:=\langle d^{p-k}f_*S,\phi\rangle:=\langle S, d^{p-k}f^*\phi\rangle.$$
Then,  the following result holds.
\begin{theorem}[\cite{d-attractor}]
Let $f,$ $\Ac$ and $U$ be as above. There exists a current $\tau\in\Cc_p(U)$ such that if $S$ is a continuous form in $\Cc_p(U)$ then
$$\lim_{n\to+\infty}\Lambda^nS=\tau.$$
Moreover $\tau$ is woven, invariant under $\Lambda$ and extremal relative to this last property.
\end{theorem}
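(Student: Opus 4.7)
The plan is to use the (HD) pencil structure to produce quasi-potentials with controlled mass for differences of currents in $\Cc_p(U)$, and to combine this with a compactness argument in the weak topology. Write $D(x) := U \cap I(x)$ for $x \in L$; by hypothesis each $D(x)$ is star-shaped in the affine chart $I(x) \setminus I \simeq \Cb^p$ with center $x$. The pencil $\{I(x)\}_{x \in L}$ foliates $U$ by $p$-disks parametrized by the $(k-p)$-dimensional base $L \subset U$, and the whole argument is organized around this slicing.

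First, I would fix a smooth reference current $\tau_0 \in \Cc_p(U)$ (for instance a suitable regularization of $\omega^p$ cut off away from $I$) and show that for any continuous $S \in \Cc_p(U)$ the difference $S - \tau_0$ admits a real $(p-1,p-1)$-quasi-potential $W$ with $\ddc W = S - \tau_0$ whose mass is controlled by the sup-norm of $S$. Concretely, one slices $S$ and $\tau_0$ along the fibers $I(x)$, solves the $\ddc$-equation fiberwise via a radial homotopy from the center $x$ inside the star-shaped set $D(x)$, and reassembles these fiberwise primitives along $x \in L$. This is the main obstacle: the fiberwise construction must glue into a globally defined current of finite mass, which requires uniformity of the star-shape bounds and continuity of the family $x \mapsto D(x)$, both provided by (HD).

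Next, iterating $\Lambda = d^{p-k} f_*$ and using $\Lambda \ddc = \ddc \Lambda$, one obtains $\Lambda^n S - \Lambda^n \tau_0 = \ddc(\Lambda^n W)$. The pushforward spreads $W$ across the $d^k$ preimage branches; after the normalization by $d^{p-k}$ and re-slicing along the pencil, the mass of $\Lambda^n W$ remains bounded while the transverse oscillations are damped, so $\langle \Lambda^n S - \Lambda^n \tau_0, \phi\rangle \to 0$ for every smooth test form $\phi$. Hence $\Lambda^n S$ and $\Lambda^n \tau_0$ share the same cluster values in the compact convex set $\Cc_p(U)$. A Ces\`aro/compactness argument produces an invariant cluster point of $\Lambda^n \tau_0$, and the contraction above forces all cluster values to coincide, giving a single limit $\tau$ independent of the continuous $S$ chosen.

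Finally, invariance $\Lambda \tau = \tau$ is immediate from weak continuity of $\Lambda$. Extremality in the cone of $\Lambda$-invariant currents in $\Cc_p(U)$ follows from a standard convex analysis argument using the convergence result: any decomposition $\tau = a\tau_1 + (1-a)\tau_2$ with invariant $\tau_i \in \Cc_p(U)$, combined with approximation of the $\tau_i$ by continuous forms inside $\Cc_p(U)$ (legitimate since $f(U) \Subset U$ allows the pushforward to smooth while keeping supports strictly inside $U$), forces $\tau_1 = \tau_2 = \tau$. The woven character of $\tau$ is read off the construction itself: $\Lambda^n \tau_0$ can be written as an average over $x \in L$ of smoothed currents of integration along the $p$-dimensional analytic preimages $f^{-n}(D(x))$, and the limit of such averages is woven in the sense of Dinh.
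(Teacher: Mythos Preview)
The paper does not prove this theorem at all: it is quoted verbatim from Dinh \cite{d-attractor} and used as a black box, so there is no ``paper's own proof'' to compare against. What you have written is a sketch of Dinh's original argument, so the relevant question is whether your sketch captures that argument correctly.

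Your outline follows the right architecture (slice along the pencil $\{I(x)\}_{x\in L}$, use the star-shaped fibers $D(x)$ to build homotopy-type primitives, push forward and pass to the limit), but the decisive step is asserted rather than argued. You write that after normalization ``the mass of $\Lambda^n W$ remains bounded while the transverse oscillations are damped, so $\langle \Lambda^n S - \Lambda^n \tau_0,\phi\rangle\to 0$.'' This is exactly the heart of Dinh's proof, and it does not follow from a mass bound alone: bounded mass for $\Lambda^n W$ only gives that cluster values of $\Lambda^n S$ and $\Lambda^n\tau_0$ differ by a $\ddc$-exact current, not that they coincide. In \cite{d-attractor} the contraction comes from a concrete geometric fact: every $S\in\Cc_p(U)$ can be written as an average $\int_L [D_S(x)]\,d\mu_S(x)$ of currents of integration on graphs over the star-shaped slices, with $\mu_S$ a probability measure on $L$; pushing forward then expresses $\Lambda^n S$ as an average of $d^{n(k-p)}$ pieces of $f^n$-images of such disks, and the convergence is obtained by comparing two such averages disk-by-disk via the structural decomposition, not by a generic quasi-potential estimate. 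Your sketch skips this structural step and replaces it with an unexplained ``damping''.

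The extremality paragraph also has a gap. Approximating an invariant $\tau_i$ by continuous forms and applying the convergence theorem only tells you that those approximations converge to $\tau$, not that $\tau_i=\tau$; you need either the structural disk decomposition again (which lets you run the convergence argument for \emph{all} $S\in\Cc_p(U)$, not just continuous ones) or an explicit argument that invariance survives the limit in the right direction. Similarly, ``the limit of such averages is woven'' is true but requires the disk decomposition to be set up first, which your sketch postponed.
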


\section{Existence of attracting sets of small topological degree in $\Pb^k$}\label{sec ex}
This section is devoted to the proof of Theorem \ref{th ex generic}. We assume that the dimension $k$ is at least $2.$ From now on and in the rest of the paper, we will always denote by $s$ the dimension of our attracting sets, i.e. $s=k-1\geq1.$ Recall from the introduction that if $d\geq2,$ then $\Fc_d$ denotes the set of pairs $f=(f_\infty,R)$ where $R$ is an homogeneous polynomial of degree $d$ in $\Cb^{k}$ and $f_\infty=(F_0,\ldots,F_s)$ is a $k$-tuple of homogeneous polynomials which defines a holomorphic endomorphism of $\Pb^s$ of degree $d.$ By an abuse of notation, we will also denote by $f_\infty$ this endomorphism. Moreover, if $f\in\Fc_d$ and $\varepsilon\in\Cb$  $f_\varepsilon$ stands for the endomorphism of $\Pb^k$ defined by
$$[z_0:\cdots:z_s:z_k]=[z':z_k]\mapsto[f_\infty(z'):z_k^d+\varepsilon R(z')].$$
When $|\varepsilon|$ is small enough, $f_\varepsilon$ admits an attracting set near the hyperplane $\{z_k=0\}.$ Indeed, for $|\varepsilon|\neq 0$ small, we will use the following trapping region
$$U_\varepsilon=\{ [z_0:\cdots:z_s:z_k]\in\Pb^k \, ; \, |z_k|\leq c|\varepsilon| \max(|z_0|,\cdots,|z_s|) \},$$
where $c>0$ is a small number depending on $f\in\Fc_d,$ see \cite[Theorem 4.1 Step 1.]{daurat} for more details. It satisfies the assumptions of Dinh (HD), and we denote by $\Ac_\varepsilon:=\underset{n\in\Nb}{\bigcap}f_\varepsilon^n(U_\varepsilon)$ the associated attracting set.

The proof of Theorem \ref{th ex generic} will be given in  the next two subsections. In the first one, we will define the Zariski open set $\Omega\subset\Fc_d$ and show that if $f\in\Omega$ then $f_\varepsilon$ is of small topological degree on $\Ac_\varepsilon$ for $|\varepsilon|\neq 0$ small enough. This part is just an adaptation of \cite{daurat} to higher dimension. 

The second subsection is devoted to show that $\Omega$ is not empty and therefore, it is a dense  Zariski open set of $\Fc_d.$ Even if it relies on elementary arguments, the construction is quite delicate when $k\geq3.$

\subsection{An algebraic sufficient condition}

All the maps $f_\varepsilon$ preserve the pencil of lines passing through $[0:\cdots:0:1],$ which is naturally parametrized by $\Pb^s.$ Moreover, via this parametrization, the dynamics induced by $f_\varepsilon$ on this pencil is exactly $f_\infty.$ The four following sets of lines play an important role in order to control to number of preimages which are in an attractive region (cf. \cite{daurat}). Denote by

\begin{align*}
&X_{-1}(f):=\{z\in\Pb^s\,;\ \sharp(f_\infty^+)^{-1}(f_\infty^+(z))\geq d^{s-1}+1= d^{k-2}+1\},\\
&X(f):=f_\infty(X_{-1}(f)),\\
&Y_{-2}(f):=\{z\in\Pb^s\,;\, R(z)=0\}\cup\\
&\{z\in\Pb^s\,;\,\exists z'\in\Pb^s, f_\infty^+(z)\neq f_\infty^+(z'),\ f_\infty(z)=f_\infty(z'),f\circ f_\infty^+(z)=f\circ f_\infty^+(z')\},\\
&Y(f):=f_\infty^2(Y_{-2}(f)).
\end{align*}
Here $f_\infty^+$ is the map from $\Pb^s$ to $\Pb^k$ defined by 
$$f_\infty^+(z)=[f_\infty(z):R(z)].$$
Observe that these sets only depend on $f\in\Fc_d,$ not on $\varepsilon.$ When no confusion is possible, we shall just write $X,$ $Y,$ etc.
Notice that in the definition of $X_{-1}(f)$ the  cardinality  is counted with multiplicity and 
$$\left(f_\infty^+\right)^-1\left(f_\infty^+(z)\right) = \{z' \in (f_\infty)^-1(f_\infty(z)) \, | \, R(z')=R(z)\}. $$

The following result gives a sufficient condition on $X$ and $Y$ to ensure the existence of an attracting set with small topological degree. It is analogous to \cite[Proposition 4.3]{daurat}.
\begin{proposition}\label{prop condi nece}
Let $Z(f):=X(f)\cup Y(f).$ If $f\in\Fc_d$ is such that
\begin{itemize}
\item $X(f)$ and $Y(f)$ are disjoint,
\item there exists $n\in\Nb$ such that $\bigcap_{i=0}^nf_\infty^{-i}(Z(f))=\varnothing,$
\end{itemize}
then for $\varepsilon\in\Cb$ with $|\varepsilon|\neq0$ small enough, $f_\varepsilon$ is of small topological degree on $\Ac_\varepsilon.$
\end{proposition}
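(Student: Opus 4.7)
The plan is to follow the strategy of \cite[Proposition 4.3]{daurat}, adapted to dimension $k\geq 3$ by replacing the power $d^0$ that appears in the two-dimensional counting by $d^{k-2}$. The key structural fact exploited throughout is the semiconjugacy $\pi\circ f_\varepsilon=f_\infty\circ\pi$, where $\pi:\Pb^k\setminus\{[0:\cdots:0:1]\}\to\Pb^s$ is the projection along the invariant pencil of lines through $[0:\cdots:0:1]$. This reduces the counting of $f_\varepsilon^{-n}(x)\cap U_\varepsilon$ to (i) a count of $f_\infty^{-n}$-preimages of $\pi(x)$ in $\Pb^s$, and (ii) for each such preimage $w$, a one-variable count on the affine line $\pi^{-1}(w)\simeq\Cb$ where $f_\varepsilon$ acts as $z_k\mapsto z_k^d+\varepsilon R(w)$ (up to normalization).

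First I would carry out the one-step analysis on a line $\pi^{-1}(w)$. For $|y_k|\leq c|\varepsilon|\max|y'_i|$ and $|\varepsilon|$ small, the equation $z_k^d=y_k-\varepsilon R(w)$ has all $d$ roots inside $\{|z_k|\leq c|\varepsilon|\}$ only when $|R(w)|$ is of order $|\varepsilon|^{d-1}$ or smaller, which forces $w$ to be close to $\{R=0\}$; otherwise at most one root falls in $U_\varepsilon$. Already this shows that in the limit $\varepsilon\to 0$ the contribution of preimages in $U_\varepsilon$ concentrates over a neighbourhood of $\{R=0\}$, which is one of the two pieces of $Y_{-2}(f)$.

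Next, I would combine this with the fibered description of $f_\infty^+$ to perform a two-step count. The definitions of $X_{-1}(f)$ and $Y_{-2}(f)$ are tailored exactly so that, outside $Z(f)=X(f)\cup Y(f)$, the two-step preimage count of $(w,y_k)$ in $U_\varepsilon$ is strictly below the maximal value $d^{k-1}$ per $f_\infty$-fiber: $X_{-1}$ records the points where the lift $f_\infty^+:\Pb^s\to\Pb^k$ has a fibre of size at least $d^{k-2}+1$ (i.e.\ extra collisions of $(f_\infty,R)$), while $Y_{-2}$ records the second-iterate collisions that allow too many preimages of distinct lines to fall simultaneously in $U_\varepsilon$. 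The disjointness of $X(f)$ and $Y(f)$ lets one treat the two obstructions separately and conclude: for $w\notin Z(f)$, the number of preimages of $x$ in $U_\varepsilon\cap\pi^{-1}(w)$ is at most $d^{k-2}$ instead of the generic $d^{k-1}$, yielding a factor $1/d$ each time the orbit of $w$ under $f_\infty$ avoids $Z(f)$.

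Finally I would globalize. By the second hypothesis there is $n_0$ such that no $f_\infty$-orbit stays in $Z(f)$ for $n_0$ consecutive steps, so along an orbit of length $m$ there are at least $\lfloor m/n_0\rfloor$ indices $i$ with $f_\infty^i(w)\notin Z(f)$. Summing the two-step count over all $f_\infty^{-m}$-preimages of $\pi(x)$ gives a bound
\[
\sharp\bigl(f_\varepsilon^{-m}(x)\cap U_\varepsilon\bigr)\leq C\,d^{m(k-1)}\,d^{-\lfloor m/n_0\rfloor},
\]
hence $\limsup_{m\to\infty}\bigl(\sharp f_\varepsilon^{-m}(x)\cap U_\varepsilon\bigr)^{1/m}\leq d^{k-1-1/n_0}<d^{k-1}=d^s$, which is the small topological degree condition. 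The main obstacle is Step 3: the bookkeeping that identifies precisely $X_{-1}(f)$ and $Y_{-2}(f)$ as the two obstructions to the strict inequality on a single fibre, and the uniform control in $\varepsilon$ of the one-variable count on the lines over $w$ close to $\{R=0\}$. Once this is settled, the iteration and globalization are routine.
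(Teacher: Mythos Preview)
Your plan is the paper's own strategy: exploit the pencil $\pi$, split the one-step preimage count into a line count (governed by $X$) and a per-line count (governed by $Y$), then iterate using the empty-intersection hypothesis. The paper formalizes this via auxiliary sets $A,B$ of ``good'' lines and the key lemma $A^c\subset X^r$, $B^c\subset Y^r$ for $|\varepsilon|$ small, after which the count becomes the tree computation you describe.

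Two points in your sketch need sharpening. First, the $\varepsilon$-dependent bad loci are only contained in \emph{open neighborhoods} $X^r,Y^r$, not in $X,Y$ themselves; the hypotheses must therefore be robustified to $X^r\cap Y^r=\varnothing$ and $\bigcap_{i\le n}f_\infty^{-i}(Z^r)=\varnothing$ for some $r>0$ \emph{before} shrinking $\varepsilon$ --- you flag the uniformity issue near $\{R=0\}$ but do not carry it through to the combinatorics. Second, you invert the role of the disjointness hypothesis: $X\cap Y=\varnothing$ is not what produces the gain $d^{s-1}$ when $\pi(x)\notin Z$ (that follows directly from $\pi(x)\in A\cap B$); rather it is what guarantees the bound $d^s$ instead of the trivial $d^k$ at the \emph{bad} steps where $\pi(x)\in Z$, since then $\pi(x)$ still lies in $A$ or in $B$. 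Without this, your product bound $d^{m(k-1)}d^{-\lfloor m/n_0\rfloor}$ is unjustified at the bad indices. Finally, the phrase ``preimages of $x$ in $U_\varepsilon\cap\pi^{-1}(w)$'' is malformed (preimages of $x$ lie over $f_\infty^{-1}(\pi(x))$, not over $\pi(x)$); what you mean is the total count $\sharp\bigl(f_\varepsilon^{-1}(x)\cap f_\varepsilon(U_\varepsilon)\bigr)$.
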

Denote by $\Omega$ the subset  of pairs $(f_\infty,R)\in \Fc_d$ satisfying the assumptions of this proposition. Before proving the result, let us point out the important fact that the set $\Omega$ is a Zariski open set. Indeed, the sets $X_{-1}$ and $Y_{-2}$ are analytic subsets of $\Pb^s,$ and so the same holds for $X,$ $Y,$ $Z$ and $f_\infty^{-i}(Z),$ for all $i\geq 1.$ When $f$ varies in $\Fc_d,$ it is easy to check that they form analytic subsets of $\Pb^s\times\Fc_d.$ The set $\Omega$ is exactly the complement of the projection on $\Fc_d$ of an analytic subset obtained by intersecting and taking finite union of such sets, see Step 3 of proof of \cite[Theorem 4.1]{daurat} for more details.

\begin{proof}
The proof is essentially the same than the one of \cite[Proposition 4.3]{daurat}. The fact that the pencil of lines is parametrized by $\Pb^s$ instead of $\Pb^1$ doesn't affect the arguments. However, for the convenience of the reader we recall the main ideas.

Throughout the proof, by a line we mean a line passing through $[0:\cdots:0:1]$. We will identify such a line $l$ with the associated point in $\Pb^s,$ i.e. the unique point in the intersection of $l$ with the hyperplane at infinity. The image of $l$ by $f_\varepsilon$ is $f_\infty(l),$ for all $\varepsilon\in\Cb.$ With this identification, the sets $X$ and $Y$ can be seen as sets of points in $\Pb^s$ or sets of lines in $\Pb^k.$

As in \cite[Step 2. Theorem 4.1]{daurat} we reduce the estimation of the number of preimages staying in $f_\varepsilon(U_\varepsilon)$ to a combinatorial problem
by analyzing the geometry of $f_\varepsilon(U_\varepsilon)$, as well as its self-crossings, and the behaviour of the preimages.

A generic line has $d^s$ preimages 
and a generic point $p$ include a line $\ell$ has $d^{s+1}=d^k$ preimages (in $\Pb^k$),  $d$ in each line $\ell_{-1}\in f^{-1}(\ell)$.
So we may :
\begin{itemize}
\item control the number of lines $\ell_{-1}$ that contain preimages staying in $U_\varepsilon$.
\item For each line  $\ell_{-1}$, bound the number of preimages that belong to $f_\varepsilon(U_\varepsilon)$.
\end{itemize}

As in \cite[Step 2. Theorem 4.1]{daurat}, this done by defining  sets $A,B$ of ``good'' lines.

The set $A$ is the set of lines $l$ such that the preimages of a point $x\in f_\varepsilon(U_\varepsilon)\cap l$, 
which lie in $U_\varepsilon$, are contained in at most $d^{s-1}$ lines. Here and in the sequel, everything is counted with multiplicity.

We say that $l$ is in $B$ if for all $x\in l\cap f_\varepsilon^2(U_\varepsilon)$ and each $l_{-1}\in f_\infty^{-1}(l)$  the point $x$ has at most one preimage in $l_{-1}\cap f_\varepsilon(U_\varepsilon)$.

To summarize :
\begin{itemize}
\item If $x\in l\in A$ then there are at most $d^{s-1}$ lines intersecting $f_\varepsilon^{-1}(x)\cap U_{\varepsilon},$ in at most $d$ points for each of them.
\item If $x\in l\in B$ then there are at most $d^{s}$ lines intersecting $f_\varepsilon^{-1}(x)\cap f_\varepsilon(U_{\varepsilon}),$ in at most $1$ point for each of them.
\item If $x\in l\in A\cap B$ then there are at most $d^{s-1}$ lines intersecting $f_\varepsilon^{-1}(x)\cap f_\varepsilon(U_{\varepsilon}),$ in at most $1$ point for each of them.
\end{itemize}
In the two first cases, $f_\varepsilon^{-1}(x)\cap f_\varepsilon(U_{\varepsilon})$ contains at most $d^s$ points. In the last one, it contains at most $d^{s-1}$ points.

By our assumptions, there exists $r>0$ such that
$$X^r \cap Y^r =\varnothing,\ \text{and}\ \bigcap_{i=0}^nf_\infty^{-i}(Z^r)=\varnothing,$$
where $X^r$ (resp. $Y^r,Z^r$) denotes the $r$-neighbourhood of $X$ (resp. $Y,Z$) in $\Pb^s.$ 

Notice that the sets $A$ and $B$ depend on $\varepsilon.$ Indeed, a key point is that the lines in $X$ (resp. $Y$) are exactly those who are not in $A$ (resp. $B$) for any $\varepsilon\in\Cb.$ This fact is illustrated by the following result which can be proved in the same way than \cite[Lemma 4.4]{daurat}.
\begin{lemma}
Fix $r$ as above. Then if $|\varepsilon|\neq 0$ is small enough, we have that 
\begin{equation}
A^c\subset X^r, \, B^c\subset Y^r \\
\text{ and } X\cup Y=Z\subset (A\cap B)^c\subset Z^r.
\end{equation}
\end{lemma}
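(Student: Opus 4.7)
The plan is to verify the four inclusions separately, in two pairs. The containments $X\subset A^c$ and $Y\subset B^c$ hold for every sufficiently small $\varepsilon\neq 0$ and follow from a direct analysis of preimages using the explicit form of $f_\varepsilon$. The converse inclusions $A^c\subset X^r$ and $B^c\subset Y^r$ require $|\varepsilon|$ small and will be proved by contradiction, using compactness of $\Pb^s$ together with continuous dependence on $\varepsilon$.

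I would begin with $X\subset A^c$. If $l\in X$ then there exists $z\in X_{-1}$ with $f_\infty(z)=l$, so the fiber $(f_\infty^+)^{-1}(f_\infty^+(z))$ contains at least $d^{s-1}+1$ distinct points in $\Pb^s$; equivalently, at least $d^{s-1}+1$ distinct lines $l_{-1}$ over $l$ each carry a marked preimage on which $R$ takes a common value, call it $R_0$. For $|\varepsilon|$ small, pick $y\in U_\varepsilon$ projecting to $z$ and set $x=f_\varepsilon(y)\in l\cap f_\varepsilon(U_\varepsilon)$. Preimages $[z':z_k]$ of $x$ solve $z_k^d=\lambda x_k-\varepsilon R(z')$ with $|\lambda|\approx 1$; since $x_k=y_k^d+\varepsilon R_0$ with $y_k=O(\varepsilon)$, on each of the $d^{s-1}+1$ lines above the right-hand side is $O(\varepsilon^d)$ rather than the generic $O(\varepsilon)$, so $|z_k|=O(\varepsilon)$ and the preimage lies in $U_\varepsilon$. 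Hence more than $d^{s-1}$ distinct lines meet $f_\varepsilon^{-1}(x)\cap U_\varepsilon$, i.e.\ $l\notin A$. The argument for $Y\subset B^c$ is parallel: the definition of $Y_{-2}$ produces either a preimage line with $R\equiv 0$ (yielding a multiple $z_k$-root, hence two preimages on the same line) or two preimages on distinct lines glued by $f\circ f_\infty^+$; applying $f_\varepsilon$ once more exhibits two preimages in a single line $l_{-1}\in f_\infty^{-1}(l)$, violating the defining property of $B$.

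Next I would prove $A^c\subset X^r$ by contradiction. Assume sequences $\varepsilon_n\to 0$ and $l_n\in A^c$ with $l_n\notin X^r$. By compactness of $\Pb^s$ pass to a subsequence $l_n\to l_\infty$; since $X^r$ is open, $l_\infty\notin X^r$, in particular $l_\infty\notin X$. Choose $x_n\in l_n\cap f_{\varepsilon_n}(U_{\varepsilon_n})$ whose preimages in $U_{\varepsilon_n}$ occupy more than $d^{s-1}$ lines, and extract $x_n\to x_\infty$; since $U_{\varepsilon_n}$ shrinks to $\{z_k=0\}$, the point $x_\infty$ lies in $l_\infty\cap\{z_k=0\}$. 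Rescaling the equation $z_k^d=\lambda x_k-\varepsilon R(z')$ by $\varepsilon$ and passing to the limit identifies each line of preimages in $U_{\varepsilon_n}$, up to multiplicity, with a point of the fiber of $f_\infty^+$ above $l_\infty$; since $l_\infty\notin X$, that fiber contributes at most $d^{s-1}$ lines, contradicting the choice of $x_n$. The proof of $B^c\subset Y^r$ is entirely analogous, with the definition of $Y_{-2}$ controlling within-line collisions.

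The equality $X\cup Y=Z$ is definitional, and combining the previous inclusions gives $Z\subset A^c\cup B^c=(A\cap B)^c\subset X^r\cup Y^r\subset Z^r$, which closes the chain. The main obstacle will be the limiting procedure in the compactness step: one has to make rigorous the correspondence between preimages of $x$ that remain in $U_\varepsilon$ and fibers of $f_\infty^+$ in the limit $\varepsilon\to 0$, using the explicit trapping region $U_\varepsilon=\{|z_k|\leq c|\varepsilon|\max|z_i|\}$ and the rescaled equation above. This is the higher-dimensional analogue of Lemma~4.4 of \cite{daurat}, with the pencil parametrized by $\Pb^s$ rather than $\Pb^1$, so the bookkeeping of preimage lines is slightly more delicate.
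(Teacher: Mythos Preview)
The paper does not actually prove this lemma; it simply states that it ``can be proved in the same way than \cite[Lemma 4.4]{daurat}.'' Your sketch is precisely the higher-dimensional analogue of that argument, as you yourself note at the end: direct verification of $X\subset A^c$ and $Y\subset B^c$ from the explicit form of $f_\varepsilon$, followed by a compactness/limiting argument for the reverse inclusions $A^c\subset X^r$ and $B^c\subset Y^r$. This is the expected approach and your outline is sound; the only substantive work, as you correctly flag, is making the rescaled limit $\varepsilon\to 0$ rigorous so that preimages trapped in $U_\varepsilon$ correspond to fibers of $f_\infty^+$, and this is exactly what is carried out in the $\Pb^1$ case in \cite{daurat}.
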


Now we have reduced the problem to a combinatorial problem. We have to make the same disjunction of cases as in the end of the proof of \cite[Proposition 4.3]{daurat}, except that there are now $n$ step instead of 3. By the previous lemma :
\begin{itemize}
\item If $x\in l \notin X^r$ then $x$ has at most $d$ preimages in at most $d^{s-1}$ lines.
\item If $x\in l \notin Y^r$ then $x$ has at most $1$ preimages in at most $d^{s}$ lines.
\end{itemize}
The condition $X^r \cap Y^r =\varnothing$ imply that 
for all $x\in f^2(U_\varepsilon)$:
\begin{itemize}
\item  either  there are at most $d^{s-1}$ lines intersecting $f_\varepsilon^{-1}(x)\cap U_{\varepsilon},$ in at most $d$ points for each of them,
\item or there are up to $d^{s}$ lines $l_{-1}$ intersecting $f_\varepsilon^{-1}(x)\cap f_\varepsilon(U_{\varepsilon})$, but in each such line there is at most $1$ preimage of $x$.
\end{itemize}
So the number of preimages of a point staying in $f_\varepsilon(U_\varepsilon)$ is less than $d^s$.
Since  $\underset{i=0,..,n}{\bigcap}f_\infty^{-i}(Z^r)=\varnothing$, there exists one step, between $0$ and $n$, such that the preimages of a line are in $A\cap B$ so have less than $d^{s-1}$ preimages. 

This is summarized in the following diagram :
\\
\begin{center}
\scalebox{0.7}{
\xymatrix{
\sharp(p\cap f_\varepsilon(U_\varepsilon))&  1\ar@{-}[d]_{\text{ in } A\cap B}\ar@{-}[rd]^{\text{ not in } A\cap B} & &  &&\\
\sharp(f_\varepsilon^{-1}(p)\cap f_\varepsilon(U_\varepsilon)) &\leq d^{s-1} \ar@{-}[d]&  \leq d^s\ar@{-}[d] \ar@{-}[rd]^{\text{ not in } A\cap B} && \\
 \sharp(f_\varepsilon^{-2}(p)\cap f_\varepsilon(U_\varepsilon))\ar@{.}[dd] & \leq d^s.d^{(s-1)}\ar@{.}[dd] &  \leq d^s.d^{s-1}\ar@{.}[dd] &   \leq d^{2s}  \ar@{.}[rrdd]&\\ 
 & &   &&&\\
\sharp(f_\varepsilon^{-(n-1)}(p)\cap f_\varepsilon(U_\varepsilon)) & \leq d^{(n-2)s}.d^{s-1}\ar@{-}[d]&   \leq d^{(n-2)s}.d^{s-1} \ar@{-}[d] \ar@{.}[rrr] & & &  \leq d^{(n-1)s} \ar@{-}[d] \ar@{-}[rd]|-{Impossible} \\
\sharp(f_\varepsilon^{-n}(p)\cap f_\varepsilon(U_\varepsilon)) & \leq d^{(n-1)s}.d^{s-1}&   \leq d^{(n-1)s}.d^{s-1} \ar@{.}[rrr] & & &  \leq d^{(n-1)s}.d^{s-1} & \divideontimes
 }
}
\\
\end{center}

Thus in each case $ \sharp(f_\varepsilon^{-n}(p)\cap f_\varepsilon(U_\varepsilon)) \leq d^{(n-1)s}.d^{s-1}<d^{ns}$ and we conclude that $f_\varepsilon^n$ is of small topological degree in $f_\varepsilon(U_\varepsilon)$, hence on $\Ac_\varepsilon$ too. This finish the proof.
\end{proof}

\subsection{Construction of examples}
In order to exhibit an element $f\in\Omega\subset\Fc_d$, the first step is to find an example for which $X(f)$ is finite. To this aim, until the end of this section, we only consider  pairs $f=(f_\infty,R)\in\Fc_d$ such that only monomials of the form $z_i^d$ appear in the expression of $f_\infty.$

Let $\xi\in\Cb$ be a primitive $d$-th root of unity. Let $G:=<\xi>\simeq\Zb/d\Zb.$ The group $G^s$ acts on $\Pb^s$ by
$$(\xi_1,\ldots,\xi_s).[z_0:z_1:\cdots:z_s]=[z_0:\xi_1z_1:\cdots:\xi_sz_s].$$
The special form of $f_\infty$ ensures that if $z$ is a preimage of $w\in\Pb^s$ then $f_\infty^{-1}(w)=G^s.\{z\}.$
Therefore, in this case, the set $X_{-1}$ consists of points $z\in\Pb^s$ such that there exists $A\subset G^s$ satisfying $\sharp A=d^{s-1}+1$, $\id\in A$ and $R(g.z)=R(h.z)$ for all $g,h\in A.$ It is easy to see that the $s+1$ points in $\Pb^s$ which have only one non-zero coordinate are fixed by all the elements of $G^s.$ In particular, they are all in $X_{-1}.$ We call these $s+1$ points the \textit{extremal points} of $\Pb^s.$

The following result exhibits an example where $X_{-1},$ and therefore $X,$ is finite.
\begin{proposition}\label{prop X fini}
If $R=(\sum_{i=0}^sz_i)^d$ then $X_{-1}$ is finite.
\end{proposition}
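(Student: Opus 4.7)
The plan is to reformulate the defining condition of $X_{-1}$ in the affine chart $\{P\neq 0\}$, where $P:=z_0+\cdots+z_s$, and then prove finiteness there by a simple spanning argument based on an elementary combinatorial lemma; the hyperplane at infinity $\{P=0\}$ is handled directly.

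I first normalize $P(z)=1$ and parametrize the chart by $w=(z_1,\ldots,z_s)\in\Cb^s$, so that $z_0=1-W$ with $W:=\sum_{i\geq 1}w_i$. Writing $\xi_0:=1$ and $P(g\cdot z)=\sum_{i=0}^s\xi_iz_i$, the equality $R(g\cdot z)=R(z)$ is equivalent to $P(g\cdot z)=\eta$ for some $\eta\in G$, which after substituting $z_0=1-W$ reads $\ell_g(w)=\eta-1$ where $\ell_g(w):=\sum_{i=1}^s(\xi_i-1)w_i$. Setting $D:=\{\eta-1:\eta\in G\}$, the condition becomes $\ell_g(w)\in D$, so $z\in X_{-1}\cap\{P\neq 0\}$ iff $|T(w)|\geq d^{s-1}+1$ with $T(w):=\{g\in G^s:\ell_g(w)\in D\}$.

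The key combinatorial lemma I will prove is the following: for any $b\in\Cb^s\setminus\{0\}$, the set $\{(\xi_1,\ldots,\xi_s)\in G^s:\sum_ib_i\xi_i=\sum_ib_i\}$ has at most $d^{s-1}$ elements. This is elementary: taking $b_s\neq 0$, the equation uniquely determines $\xi_s$ from $(\xi_1,\ldots,\xi_{s-1})\in G^{s-1}$. The consequence I want is that any $d^{s-1}$ distinct forms among $\{\ell_g:g\neq\id\}$ span $(\Cb^s)^*$: otherwise their coefficient vectors $(\xi_1-1,\ldots,\xi_s-1)$ would lie in some hyperplane $\{v\in\Cb^s:\sum_ib_iv_i=0\}$ with $0\neq b\in\Cb^s$, and together with the always-valid solution $g=\id$ this would produce $d^{s-1}+1$ solutions to $\sum_ib_i\xi_i=\sum_ib_i$, contradicting the lemma.

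Finiteness in the affine chart then follows quickly: for each $w$ with $|T(w)|\geq d^{s-1}+1$, pick a subset $S\subset T(w)\setminus\{\id\}$ of size $d^{s-1}$ and, for each $g\in S$, an $\eta_g\in G$ with $\ell_g(w)=\eta_g-1$; the spanning property forces $w$ to be the unique solution of the affine-linear system $\{\ell_g=\eta_g-1\}_{g\in S}$, and there are only finitely many data $(S,(\eta_g))$, hence only finitely many such $w$. On $\{P=0\}$ the condition $R(g\cdot z)=0=R(z)$ reads $P(g\cdot z)=0$; substituting $z_0=-\sum_{i\geq 1}z_i$ gives $\ell_g(w)=0$ with $w=(z_1,\ldots,z_s)\neq 0$, and the key lemma applied to $b=w$ yields $|T(z)|\leq d^{s-1}$, so $X_{-1}\cap\{P=0\}=\varnothing$. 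The main subtlety to keep track of is the role of $\id$, which absorbs exactly the unit of slack needed to upgrade the $\leq d^{s-1}$ bound of the lemma into the statement that any $d^{s-1}$ non-trivial forms $\ell_g$ span.
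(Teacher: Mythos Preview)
Your proof is correct and follows a genuinely different route from the paper's.

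The paper argues by induction on $s$: it works in the chart $\{z_0\neq 0\}$, disposes of each coordinate hyperplane $\{z_i=0\}$ by invoking the inductive hypothesis (via a pigeonhole that produces a subset $A'$ of size $d^{s-2}+1$), and on the open cell where no coordinate vanishes builds a filtration $A_1\subset\cdots\subset A_s=A$ with $|A_i|=d^{i-1}+1$ in order to extract a lower--triangular invertible linear system determining $z$ from the data $\eta\colon A\to G$. Your argument, by contrast, works in the chart $\{P\neq 0\}$, shows directly that $X_{-1}\cap\{P=0\}=\varnothing$, and replaces the induction/filtration by a single clean combinatorial fact: for any $b\neq 0$ the equation $\sum b_i\xi_i=\sum b_i$ has at most $d^{s-1}$ solutions in $G^s$, which immediately forces any $d^{s-1}$ of the linear forms $\ell_g$ ($g\neq\id$) to span $(\Cb^s)^*$. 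This gives uniqueness of $w$ for each choice of $(S,(\eta_g))$ in one stroke. Your decomposition $\{P\neq 0\}\cup\{P=0\}$ is better adapted to the specific $R$ than the paper's $\{z_0\neq 0\}\cup\bigcup\{z_i=0\}$, and the spanning lemma absorbs in one line what the paper does through the iterated filtration; the paper's argument is perhaps more constructive about how the triangular system arises, but yours is shorter and avoids induction entirely.
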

\begin{proof}
We have to prove that if $A\subset G^s$ satisfied $\sharp A=d^{s-1}+1$ then the set $X_A$ of points $z\in\Pb^s$ such that $R(g.z)=R(h.z),$ $\forall g,h\in A,$ is finite.

The choice of $R$ provides two advantages. The first one is that if $R(z)=R(w)$ then there exists $\eta\in\Cb$ such that $\sum_{i=0}^sz_i=\eta\sum_{i=0}^sw_i,$ which is a linear equation. Hence, the problem turns out to be reduce to basic linear algebra. The second one is that the restriction of $R$ to $\{z_i=0\}$ gives the counterpart of $R$ in dimension $s-1$ on $\{z_i=0\}\simeq\Pb^{s-1}.$ It allows us to process by induction on $s.$

For $s=0,$ there is nothing to prove since $\Pb^0$ is reduced to a single point. Now, let $s\geq1$ and assume that the result holds for $s-1.$ To be more specific, assume that for any subset $A'\subset G^{s-1}$ with $\sharp A'=d^{s-2}+1,$ there is only a finite set $X'_{A'}$ of points $z'\in \Pb^{s-1}$ such that $R'(g.z')=R'(h.z')$ for all $g,h\in A',$ where $R'(z'):=(\sum_{i=0}^{s-1}z'_i)^d.$

Let $A\subset G^s$ with $\sharp A=d^{s-1}+1.$ The first step is to show that $X_A\cap\{z_i=0\}$ is finite.

Let $1\leq i\leq s$ and $\alpha\in G.$ If $A_{i,\alpha}$ denotes the set of points $(\xi_1,\ldots,\xi_s)\in A$ such that $\xi_i=\alpha$ then $\sum_{\alpha\in G} \sharp A_{i,\alpha}=d^{s-1}+1.$ Hence, there exists at least one $\tilde \alpha\in G$ such that $\sharp A_{i,\tilde \alpha}\geq d^{s-2}+1.$ Since the projection 
$$\pi\colon(\xi_1,\ldots,\xi_{i-1},\xi_i,\xi_{i+1},\ldots,\xi_s)\mapsto (\xi_1,\ldots,\xi_{i-1},\xi_{i+1},\ldots,\xi_s)$$
 is injective restricted to $A_{i,\tilde \alpha},$ the induction assumption implies, via the natural isomorphisms $\{z_i=0\}\simeq\Pb^{s-1}, X_A\cap\{z_i=0\}\simeq X'_{A'}$ with $A':=\pi(A_{i,\tilde \alpha}),$ that $X_A\cap\{z_i=0\}$ is finite. For $X_A\cap\{z_0=0\}$ the same argument holds with $A':=\pi(A_{i,\tilde \alpha})$ for an arbitrary $1\leq i\leq s.$
 
It remains to prove the result outside $\cup_{i=0}^s\{z_i=0\}.$ Let $z=[z_0:\cdots:z_s]\in X_A$ such that none of its coordinates vanish. In particular, $z_0\neq 0$ so we can normalize by $z_0=1.$ Up to exchange $z$ by $h.z$ and $A$ by $h^{-1}A,$ where $h$ is an arbitrary element of $A,$ we can assume that $\id\in A.$ By definition of $X_A,$ $z$ has to satisfy $R(z)=R(g.z).$ Hence, there exists a function $\eta\colon A\to G$ such that
\begin{equation}\label{eq1}
1+\sum_{i=1}^sz_i=\eta(g)\left(1+\sum_{i=1}^s\xi_iz_i\right),
\end{equation}
for each $g=(\xi_1,\ldots,\xi_s)$ in $A.$

With the same argument as above, there exists $\alpha\in G$ such that the set of $g=(\xi_1,\ldots,\xi_s)\in A$ such that $\eta(g)\xi_s=\alpha$ has more than $d^{s-2}+1$ elements. Up to a change of variables, we can assume that $\alpha=1.$ By induction, we obtain a filtration of $A,$ 
$$A_1\subset\cdots\subset A_{s-1}\subset A_s:=A,$$
such that $\sharp A_i=d^{i-1}+1$ and $\eta(g)\xi_j=1$ if $j>i$ and $g=(\xi_1,\ldots,\xi_s)\in A_i.$

We will now show that there exists $g_i=(\xi_{i,1},\ldots,\xi_{i,s})\in A_i,$ $1\leq i \leq s,$ such that $(z_1,\ldots,z_s)$ is the unique solution of the system of $s$ linear equations obtained from \eqref{eq1}, whose matrix form is
$$\left(\begin{array}{ccccc|c}
\alpha_{1,1}&0&0&\cdots&0&\beta_1\\
\alpha_{2,1}&\alpha_{2,2}&0&\cdots&0&\beta_2\\
\vdots&\vdots &\ddots &\ddots& \vdots&\vdots\\
\alpha_{s-1,1}&\alpha_{s-1,2}&\cdots&\alpha_{s-1,s-1}&0&\beta_{s-1}\\
\alpha_{s,1}&\alpha_{s,2}&\cdots&\cdots&\alpha_{s,s}&\beta_s
\end{array}\right),$$
where $\alpha_{i,j}=\eta(g_i)\xi_{i,j}-1$ and $\beta_i=1-\eta(g_i).$ Indeed, we only have to show that we can find $g_i\in A_i$ such that $\alpha_{i,i}\neq0$ for $1\leq i\leq s.$

For $i=1,$ since our system has at least one solution $(z_1,\ldots,z_s),$ $\alpha_{1,1}=0$ implies that $\beta_1=0$ i.e. $\eta(g_1)=1.$ Hence, $\xi_{1,1}=1$ and $g_1$ has to be equal to $\id.$ But $A_1$ has two distinct elements, so we can choose $g_1\in A_1$ such that $\alpha_{1,1}\neq0.$

Now assume that $\alpha_{j,j}\neq0$ for $1\leq j\leq i-1.$ After simplifications, we obtain the following subsystem
$$\left(\begin{array}{ccccc|c}
\alpha_{1,1}&0&0&\cdots&0&\tilde\beta_1\\
0&\alpha_{2,2}&0&\cdots&0&\tilde\beta_2\\
\vdots&\ddots &\ddots &\ddots& \vdots&\vdots\\
0&\cdots&0&\alpha_{i-1,i-1}&0&\tilde\beta_{i-1}\\
\alpha_{i,1}&\alpha_{i,2}&\cdots&\cdots&\alpha_{i,i}&\beta_i
\end{array}\right),$$
where $\tilde\beta_j\neq 0$ since $z_j\neq 0.$ Since $\alpha_{i,i}=\eta(g_i)\xi_{i,i}-1,$ it vanishes if and only if $\eta(g_i)=\xi_{i,i}^{-1}.$ As above, if it is the case then
$$\beta_i-\sum_{j=1}^{i-1}\alpha_{i,j}\tilde\beta_j=0,$$
which can be rewritten, using $\eta(g_i)=\xi_{i,i}^{-1},$
$$\xi_{i,i}\sum_{j=0}^{i-1}\tilde\beta_j-\sum_{j=0}^{i-1}\xi_{i,j}\tilde\beta_j=0,$$
where $\tilde\beta_0:=1$ and $\xi_{i,0}:=1.$ If $\sum_{j=0}^{i-1}\tilde\beta_j\neq0$ then
$$\xi_{i,i}=\frac{\sum_{j=0}^{i-1}\xi_{i,j}\tilde\beta_j}{\sum_{j=0}^{i-1}\tilde\beta_j},$$
and therefore, for each choice of $\xi_{i,j},$ $1\leq j\leq i-1$ there is a unique possible value for $\xi_{i,i},$ and so, there are at most $d^{i-1}$ choices for which $\alpha_{i,i}=0.$ But $A_i$ has $d^{i-1}+1$ distinct elements, hence there exists $g_i\in A_i$ such that $\alpha_{i,i}\neq 0.$ Finally, if $\sum_{j=0}^{i-1}\tilde\beta_j=0,$ the same argument holds with the equation
$$\xi_{i,i-1}=\frac{\sum_{j=0}^{i-2}\xi_i,j\tilde\beta_j}{\tilde\beta_{i-1}}.$$
Therefore, by induction, the system is invertible and admits a unique solution $(z_1,\ldots,z_s).$

At the end, we have prove that for each function $\eta\colon A\to G,$ there exists at most one point in $X_A\setminus\cup_{i=0}^s\{z_i=0\}.$ In particular, this set is finite.
\end{proof}

\begin{remark}
When $f_\infty$ is of the form as above then $X_{-1}$ has at least $s(d-1)+1$ points counted with multiplicity. We don't know if $X_{-1}$ can be empty in the general case.
\end{remark}
We will need the following result.
\begin{lemma}
Let $Z$ be an analytic subset of $\Pb^s.$ Let $Z_0:=Z$ and $Z_{n+1}:=f_\infty^{-1}(Z_n)\cap Z_n.$ If the analytic subsets $Z_n$ are all non-empty then $Z$ contains a periodic point.
\end{lemma}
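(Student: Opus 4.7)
The plan is to leverage the Noetherian property for analytic subsets of $\Pb^s$ to extract from the nested chain $(Z_n)$ a nonempty totally invariant analytic subvariety of $Z$, and then locate a periodic point inside it.

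First, I would note that $Z_{n+1}\subseteq Z_n$, so by Noetherianity the chain stabilizes: there exists $N$ with $Z_n=W:=Z_N$ for all $n\geq N$. The defining relation $Z_{N+1}=f_\infty^{-1}(Z_N)\cap Z_N=Z_N$ then yields $W\subseteq f_\infty^{-1}(W)$, i.e.\ $f_\infty(W)\subseteq W$. To upgrade forward invariance to total invariance I would iterate direct images: the chain $W\supseteq f_\infty(W)\supseteq f_\infty^2(W)\supseteq\cdots$ is a decreasing sequence of nonempty analytic subsets (since $f_\infty$ is a finite morphism, direct images of analytic sets are analytic). A second application of Noetherianity produces a nonempty analytic $W^\ast\subseteq W\subseteq Z$ with $f_\infty(W^\ast)=W^\ast$.

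Next, I would decompose $W^\ast=C_1\cup\cdots\cup C_k$ into irreducible components. Since $f_\infty$ is finite and continuous, each $f_\infty(C_i)$ is an irreducible subvariety of $W^\ast$ of the same dimension as $C_i$, and therefore coincides with some component $C_{\sigma(i)}$; the total invariance forces $\sigma$ to be a permutation of $\{1,\ldots,k\}$. After replacing $f_\infty$ by the iterate $f_\infty^p$ with $p$ the order of $\sigma$, every component is fixed, giving by restriction a surjective holomorphic self-map $g:=f_\infty^p|_C:C\to C$ for any chosen component $C=C_i$. If $\dim C=0$ then $C$ is a single fixed point of $f_\infty^p$, hence periodic for $f_\infty$ and lying in $Z$, and we are done. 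If $\dim C\geq 1$, then $g$ is a polarized endomorphism: the restriction of $\Oc_{\Pb^s}(1)$ is an ample line bundle $L$ on $C$ satisfying $g^\ast L\cong L^{\otimes d^p}$ with $d^p\geq 2$. I would then invoke the fact that a polarized endomorphism of a projective variety admits a Zariski dense set of periodic points (in the style of Fakhruddin), producing a periodic point of $f_\infty$ inside $C\subseteq Z$.

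The main obstacle is this final step: the existence of a periodic point for the surjective endomorphism $g$ of the (possibly singular) irreducible projective variety $C$ when $\dim C\geq 1$. The Noetherian reductions are essentially formal, but locating periodic points on $C$ requires either invoking the density theorem for polarized endomorphisms or running a Lefschetz-type fixed point count with traces controlled via the polarization $g^\ast L\cong L^{\otimes d^p}$, which is the nontrivial algebraic-geometric input in the proof.
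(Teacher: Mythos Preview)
Your proposal is correct and follows essentially the same approach as the paper's proof: both use Noetherianity twice---first on the decreasing chain $(Z_n)$ to obtain a forward-invariant $W$, then on the chain $(f_\infty^n(W))$ to obtain a nonempty totally invariant $W^\ast$---and both appeal to Fakhruddin's result on periodic points of polarized endomorphisms for the final step. The only difference is that you spell out the reduction to an irreducible component and the permutation of components under $f_\infty$, whereas the paper invokes \cite[Theorem 5.1]{fak-questions} directly on $W^\ast$ without this decomposition; your extra step is harmless (and arguably clarifies why the citation applies) but not strictly needed since Fakhruddin's density statement already covers reducible invariant subvarieties.
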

\begin{proof}
Observe that $Z_n=\cap_{i=0}^nf_\infty^{-i}(Z).$ Therefore, it is a decreasing sequence of analytic subsets in $\Pb^s$ and so must be stationary, i.e. there exists $n_0\in\Nb$ such that $Z_{n_0}\subset f_\infty^{-1}(Z_{n_0}),$ or equivalently $f_\infty(Z_{n_0})\subset Z_{n_0}.$ Again, the sequence $f_\infty^n(Z_{n_0})$ is decreasing and so there exists $n_1\in\Nb$ such that $f_\infty^{n_1+1}(Z_{n_0})=f_\infty^{n_1}(Z_{n_0}).$ If $Z_{n_0}\neq \varnothing$ then $f_\infty^{n_1}(Z_{n_0})$ is a non-empty analytic subset, invariant by $f_\infty$ so it must contain a periodic point (cf. e.g. \cite[Theorem 5.1]{fak-questions}).
\end{proof}
Using this lemma with $Z=X\cup Y,$ in order to have an example satisfying the assumptions of Proposition \ref{prop condi nece}, it remains to find a map such that $X$ and $Y$ are disjoint and do not contain periodic points. Let $\textrm{Per}_n(f_\infty)$ denote the set of periodic point of $f_\infty$ of period $n.$
\begin{proposition}
If $k\geq2,$ there exists $f=(f_\infty,R)\in\Fc_d$ such that
\begin{itemize}
\item $X\cap Y=\varnothing,$
\item $X\cap \textrm{Per}_n(f_\infty)=\varnothing$ for all $n\in\Nb,$
\item $Y\cap \textrm{Per}_n(f_\infty)=\varnothing$ for all $n\in\Nb.$
\end{itemize}
\end{proposition}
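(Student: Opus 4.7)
The plan is to realize the desired $f$ as a generic point of a suitable irreducible family $\Lambda\subset\Fc_d$. Take $R = (\sum_{i=0}^{s}z_i)^d$ and let $f_\infty$ range over a parametrized family of perturbations of the pure-diagonal map $[z_0^d:\cdots:z_s^d]$ for which Proposition \ref{prop X fini} controls $X_{-1}(f)$ (and hence $X(f)$) as a finite set. The sets $X(f)$, $Y(f)$ and $\mathrm{Per}_n(f_\infty)$ then form algebraic families over $\Lambda$, i.e.\ they are fibers of constructible subvarieties of $\Pb^s\times\Lambda$. Consequently the three conditions to avoid, namely $X\cap Y\neq\varnothing$, $X\cap\mathrm{Per}_n\neq\varnothing$, $Y\cap\mathrm{Per}_n\neq\varnothing$, define closed subsets $G$, $E_n$, $F_n$ of $\Lambda$ obtained by projecting the corresponding incidence loci from $\Pb^s\times\Lambda$ onto $\Lambda$.

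The goal is then to show that each of these sets is a \emph{proper} subset of $\Lambda$. Granted this, the union $G\cup\bigcup_{n\geq1}(E_n\cup F_n)$ is a countable union of proper Zariski-closed subvarieties of the irreducible complex variety $\Lambda$, so by Baire category it cannot cover $\Lambda$, and any parameter in its complement yields a valid $f$. For the conditions $G$ and $E_n$ the verification is the easier part: a small perturbation of the pure-diagonal $f_\infty$ destroys the extremal fixed points (which are exactly the obstruction for the diagonal model itself), while the analysis of Proposition \ref{prop X fini} shows that $X$ remains a small finite set (or even becomes empty in truly generic directions), so $X$ generically misses both the proper subvariety $Y$ and the finite set $\mathrm{Per}_n$.

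The main obstacle is the properness of $F_n$ for every $n$. The problem is that $Y$ always contains the hypersurface $f_\infty^2(\{R=0\})$, an unavoidable codimension-one component not controlled by any construction we have so far. One must therefore exhibit, for each $n$, at least one parameter value for which no periodic point of period $n$ lies on this hypersurface. Since both the hypersurface and the finite set of period-$n$ points vary algebraically with the parameter, this reduces to finding a single transversal witness: starting from the diagonal model and perturbing in a direction that visibly shifts the period-$n$ points off the hypersurface. Performing this transversality check uniformly in $n$ is the main technical ingredient; once done, the Baire argument concludes the proof.
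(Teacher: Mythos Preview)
Your plan leaves the decisive step undone. You correctly isolate the difficulty: with $R$ fixed, $Y$ always contains the hypersurface $f_\infty^2(\{R=0\})$, and you must show that for each $n$ some parameter keeps all period-$n$ points off it. But since you vary $f_\infty$, both this hypersurface and $\mathrm{Per}_n(f_\infty)$ move simultaneously, and the ``transversality check uniformly in $n$'' you defer is precisely the content of the proposition; nothing preceding it is more than bookkeeping. You also do not address the second component of $Y_{-2}$, which is itself a hypersurface and must be handled the same way. (Incidentally, your parenthetical that $X$ ``becomes empty in truly generic directions'' is false in the class where Proposition~\ref{prop X fini} applies: the extremal points always lie in $X_{-1}$, cf.\ the Remark following that proposition.)

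The paper avoids this obstacle by reversing the roles: it \emph{fixes} $f_\infty$ once and for all (specifically $[z_0:\cdots:z_s]\mapsto[z_1^d:\cdots:z_s^d:z_s^d+z_0^d]$, so that only monomials $z_i^d$ occur and no extremal point is preperiodic) and varies $R$ in two decoupled stages. First, replacing $R=(\sum z_i)^d$ by $R_\alpha=(\sum\alpha_i^{-1}z_i)^d$ moves the finite set $X_{-1}$ along its $(\Cb^*)^{s+1}$-orbit, which is uncountable away from the extremal points; since the preperiodic points of the fixed $f_\infty$ form a countable set, a pure cardinality argument (no transversality) produces an $\alpha$ with $X\cap\mathrm{Per}_n(f_\infty)=\varnothing$ for every $n$. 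Second---and this is the structural point your approach lacks---adding $\sum c_iz_i^d$ to $R$ does \emph{not} move $X$: whenever $f_\infty(z)=f_\infty(z')$ one has $z_i^d=(z'_i)^d$ for every $i$, so the added term shifts $R(z)$ and $R(z')$ by the same amount. This additive freedom then pushes $Y_{-2}$ off the countable set $E=X\cup\bigcup_n\mathrm{Per}_n(f_\infty)$ (after closing $E$ under $f_\infty^{-1}\circ f_\infty$), again by elementary countability. No transversality computation is needed anywhere.
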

\begin{proof}
We first exhibit an example where $X\cap \textrm{Per}_n(f_\infty)=\varnothing$ for all $n\in\Nb.$ Let $f_\infty$ be an endomorphism of $\Pb^s$ where only monomials of the form $z_i^d$ appear. Moreover, assume that none of the $s+1$ extremal points are preperiodic. It is easy to check that 
$$f_\infty[z_0:\cdots:z_s]=[z_1^d:z_2^d:\cdots:z_s^d:z_s^d+z_0^d]$$
is such a map when $s\geq1.$ By Proposition \ref{prop X fini}, if $R=(\sum_{i=0}^sz_i)^d$ then $X_{-1}(f_\infty,R)$ is finite. Moreover, if $\alpha=(\alpha_0,\ldots,\alpha_s)\in(\Cb^*)^{s+1}$ then $X_{-1}(f_\infty,R_\alpha)$ associated to $R_\alpha=(\sum_{i=0}^s\alpha_i^{-1}z_i)^d$ is the image of $X_{-1}(f_\infty,R)$ by $\alpha$, via the standard action of $(\Cb^*)^{s+1}$ on $\Pb^s$. But, the orbits of this action are all uncountable, except for the extremal points. Hence, by a cardinality argument, we can find $\alpha\in(\Cb^*)^{s+1}$ such that no point in $X_{-1}(f_\infty,R_\alpha)$ is preperiodic. In particular $X(f_\infty,R_\alpha)\cap\textrm{Per}_n(f_\infty)=\varnothing$ for all $n\in\Nb.$

Now, observe that adding $\sum_{i=0}^sc_iz_i^d$ to $R$ doesn't change neither $X$ nor, of course, $\textrm{Per}_n(f_\infty).$ And this operation allows to choose $R$ such that $Y,$ or equivalently $Y_{-2},$ avoids a prescribed countable set. Indeed, let $E\subset \Pb^s$ be a countable set. We can assume that $f_\infty^{-1}(f_\infty(E))=E.$ For $Y_{-2},$ it is clear that, for a very generic choice of $\sum_{i=0}^sc_iz_i^d,$ the resulting $R$ will satisfy $\{R(z)=0\}\cap E=\varnothing.$ For the second part, it is enough to choose $\sum_{i=0}^sc_iz_i^d$ in such a way that $f\circ f_\infty^+(z)\neq f\circ f_\infty^+(z')$ for all $z,z'\in E$ with $z\neq z'.$ Again, it is satisfied for a very generic choice.

Starting from $f=(f_\infty,R_\alpha)$ as above, these arguments with 
$$E=\cup_{n\geq1}\textrm{Per}_n(f_\infty)\cup X$$
 give the desired result.
\end{proof}

\section{Pluripotential theoretic properties and equidistribution}\label{sec pot}
We refer to \cite{de-book} for basics on currents and plurisubharmonic functions and to \cite{ds-lec} for a survey on their use in complex dynamics and in particular for the construction of the Green current $T$ and the equilibrium measure $\mu:=T^k.$

Let $R$ and $S$ be two currents in $\Cc_1(\Pb^k).$ There exists a unique dsh function $u_{R,S},$ i.e. defined outside a pluripolar set of $\Pb^k$ and which is locally the difference of two plurisubharmonic functions, such that
$$\ddc u_{R,S}=R-S\ \ \text{and} \ \ \langle \mu, u_{R,S}\rangle=0.$$
The operator $\Lambda$ also acts on these potentials by
$$\Lambda u_{R,S}(x)=\frac{1}{d^s}\sum_{y\in f^{-1}(x)}u_{R,S}(y),$$
where $s=k-1$ and the points in $f^{-1}(x)$ are counted with multiplicity. It sends bounded (resp. continuous) functions to bounded (resp. continuous) functions and it satisfies $\Lambda \ddc u_{R,S}=\ddc\Lambda u_{R,S}.$

A current $R$ has bounded (resp. continuous) potential if there exists a smooth form $S\in\Cc_1(\Pb^k)$ such that $u_{R,S}$ is bounded (resp. continuous). It implies that $u_{R,S}$ is bounded (resp. continuous) for any current $S\in\Cc_1(\Pb^k)$ with bounded (resp. continuous) potential. The operator $\Lambda$ preserves these two classes of currents. Moreover, $u_{\Lambda R,\Lambda S}=\Lambda u_{R,S}$ since $\ddc$ commutes with $\Lambda$ and
$$\langle \mu,\Lambda u_{R,S}\rangle=\langle d^{-s}f^*\mu,u_{R,S}\rangle=d\langle \mu, u_{R,S}\rangle=0.$$

From now on and for the rest of this section, $f$ is a holomorphic endomorphism of $\Pb^k$ of degree $d\geq2$ which has an attracting set $\Ac$ of codimension $1$ and of small topological degree. Moreover, we assume that $\Ac$ has two trapping regions $U$ and $U'$ such that
$$f(U)\Subset f(U')\Subset U\Subset U'$$
and such that $U$ satisfies (HD).

The main result in this section is that under these assumptions, the attracting current constructed by Dinh has continuous potential.

\begin{theorem}\label{th pot continu}
The attracting current $\tau$ of $\Ac$ has continuous potential. Moreover, there exist constants $A>0$ and $\alpha>0$ such that for all $x,y\in \Pb^k$
$$|u_{\tau,\omega}(x)-u_{\tau,\omega}(y)|\leq A \left|\log(\dist(x,y))\right|^{-\alpha}.$$
\end{theorem}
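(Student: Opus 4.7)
The plan is to establish continuity of $u := u_{\tau,\omega}$ by exhibiting it as a uniform limit of the smooth potentials $u_n := u_{\Lambda^n\omega,\,\omega}$ of the currents $\Lambda^n\omega \to \tau$ supplied by Dinh's theorem. After normalizing by $\int\cdot\,d\mu = 0$, and using that $\Lambda$ commutes with $\ddc$, the telescoping identity
$$u_{n+m} - u_n \;=\; u_{\Lambda^{n+m}\omega,\,\Lambda^n\omega} \;=\; \Lambda^n\bigl(u_{\Lambda^m\omega,\,\omega}\bigr) \;=\; \Lambda^n u_m$$
reduces uniform convergence to a quantitative decay
$$\|\Lambda^n w\|_\infty \;\leq\; C\rho^n\|w\|_\infty, \qquad \rho<1,$$
for $w$ running through a suitable class of continuous mean-zero dsh functions (containing the $u_m$'s, or, after a reduction, the differences $u_{S,S'}$ with $S,S'\in\Cc_1(U)$).

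The decay is driven by the small-topological-degree hypothesis, which provides $\lambda<d^s$ with $\#\bigl(f^{-n}(x)\cap U\bigr) \leq C\lambda^n$ uniformly in $x$. Writing $\Lambda^n w(x) = d^{-sn}\sum_{y\in f^{-n}(x)} w(y)$ and splitting the sum according to whether $y\in U$ or $y\in \Pb^k\setminus U$, the ``inside'' contribution is immediately bounded by $C(\lambda/d^s)^n\|w\|_\infty$ and decays geometrically. The ``outside'' sum has up to $d^{kn}$ terms and is the delicate one; here one reduces (by working with $u_{S,S'}$ rather than $u_n$ directly, so that $\ddc w = S-S'$ is supported in $U$) to the case where $w$ is pluriharmonic on $\Pb^k\setminus\overline U$. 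Under this rigidity, the fibered, star-shaped geometry of $U$ afforded by (HD) lets one parametrize the slicing of $w$ along the pencil of lines through the center $I$; the resulting one-dimensional ``harmonic averaging'' along the pencil, combined with the fact that preimages outside $U$ equidistribute (with rate) towards the equilibrium measure $\mu$ on which $w$ has mean zero, brings the outside contribution below $C\rho^n\|w\|_\infty$. Once the full decay estimate is in hand, $(u_n)$ is Cauchy in $L^\infty$, its uniform limit is continuous, and a routine normalization check identifies this limit with $u$.

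For the quantitative log-Hölder modulus, the plan is to combine the uniform rate $\|u - u_n\|_\infty \leq C\rho^n$ just obtained with a quantitative modulus of continuity for the smooth approximant $u_n$ itself. Expressing $u_n$ through the pushforward $d^{-sn}f_*^n\omega$ and invoking a Chern--Levine--Nirenberg-type bound for bounded quasi-plurisubharmonic functions gives, for $r := \dist(x,y)$, an estimate of the shape $|u_n(x)-u_n(y)| \leq C\eta(n)\,|\log r|^{-1}$ with $\eta(n)$ growing only sub-exponentially in $n$. Splitting $|u(x)-u(y)| \leq 2\|u - u_n\|_\infty + |u_n(x)-u_n(y)|$ and optimizing $n$ as a function of $|\log r|$ balances the two terms and yields the claimed bound $A|\log\dist(x,y)|^{-\alpha}$. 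The logarithmic rather than Hölder nature of the final estimate reflects the fact that only log-type regularity, and not a genuine power modulus in $r$, is available for the approximants $u_n$ at a rate uniform in $n$.

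I expect the hardest step to be the decay $\|\Lambda^n w\|_\infty \leq C\rho^n\|w\|_\infty$. Naively the operator $\Lambda$ has sup-norm $d$ on bounded functions, since $|f^{-1}(x)| = d^k$ and the normalization is $d^{-s}$; so $\Lambda^n$ a priori dilates by $d^n$. Extracting a genuine contraction requires coupling, in an essential way, the small-topological-degree count of preimages inside $U$ with the pluriharmonic rigidity of $w$ off $U$ enforced by the fact that $\ddc w$ is supported in the trapping region. This interplay between two distinct codimension-one phenomena, mediated by the (HD) pencil structure, is what I expect to form the technical heart of the proof.
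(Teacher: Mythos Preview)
Your overall architecture---approximate $u_{\tau,\omega}$ by continuous potentials, prove uniform convergence via exponential decay of $\|\Lambda^n w\|_\infty$, bound the modulus of the approximant, optimize in $n$---matches the paper's. But both key estimates have genuine gaps.

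For the decay $\|\Lambda^n w\|_\infty \leq C\rho^n\|w\|_\infty$: your direct split of $f^{-n}(x)$ into preimages inside and outside $U$ cannot close. The outside sum carries $\approx d^{kn}$ terms weighted by $d^{-sn}$, so is a priori of order $d^n\|w\|_\infty$. Your proposed cure, that ``preimages outside $U$ equidistribute (with rate) towards $\mu$,'' is circular: since $d^{-kn}\sum_{y\in f^{-n}(x)}w(y)=d^{-n}\Lambda^n w(x)$, asking this to decay faster than $d^{-n}$ uniformly in $x$ is exactly the estimate you are trying to prove. The paper avoids this with a \emph{one-step} recursion on $u_n := u_{\Lambda^n S,\tau}=\Lambda^n u_0$: split only $f^{-1}(x)$, getting at most $d^s-1$ preimages in $U'$ (contributing $\tfrac{d^s-1}{d^s}\|u_n\|_\infty$), while for each $y\notin U'$ one has the pointwise bound $|u_n(y)|\leq C\lambda^n$. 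That bound is Corollary~\ref{cor vitesse pluriharmonique}: write $u_n(y)=\langle\Lambda^n S-\tau,\phi_y\rangle$ with $\phi_y$ smooth on $U$ and $\ddc\phi_y=\delta_y-\mu$, then invoke Lemma~\ref{le vitesse pluriharmonique} (exponential speed on pluriharmonic test forms, from \cite{attrac-speed}), which is where (HD) actually enters. The point you miss is that at outside preimages one evaluates $u_n$, not $u_0$, and $u_n$ is already small there by a mechanism \emph{independent} of the sup-norm recursion. The recursion then reads $\|u_{n+1}\|_\infty \leq \lambda\|u_n\|_\infty + Cd\lambda^n$, started from $\|u_0\|_\infty<\infty$ using the prior fact \cite{daurat} that $\tau$ has bounded potential.

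For the modulus of the approximant: Chern--Levine--Nirenberg controls masses and $L^p$ norms, not pointwise oscillation; bounded quasi-psh functions carry no uniform $|\log r|^{-1}$ modulus, so your claimed $|u_n(x)-u_n(y)|\leq C\eta(n)|\log r|^{-1}$ with sub-exponential $\eta$ has no support. The paper uses instead the Dinh--Sibony preimage lemma \cite[Lemma 4.3]{dinh:sib:equigreen}, which pairs $f^{-1}(x)$ with $f^{-1}(y)$ so that $\dist(x_i,y_i)\leq C_0\dist(x,y)^{1/d^k}$. Iterating from a Lipschitz $v_1=u_{\Lambda S,S}$ gives $|v_n(x)-v_n(y)|\leq nd^nC_1\,\dist(x,y)^{d^{-kn}}$; the prefactor $nd^n$ is exponential, not sub-exponential, but taking $n\approx \log|\log\dist(x,y)|/(2k\log d)$ balances this against the uniform error $\lesssim\lambda^n$ and yields $\alpha=-\log\lambda/(2k\log d)$.
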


The proof is based on two facts. The first one is that $\tau$ has bounded potential \cite[Theorem 3.7]{daurat}. The second one is a lemma on the speed of convergence for pluriharmonic form. To be more specific, let $\mathcal H$ be the set of all continuous real $(s,s)$-forms $\phi$ on $U$ such that $\ddc\phi= 0$ and $|\langle R-\tau,\phi\rangle|\leq1$ for all $R\in\Cc_1(U).$ We have the following result \cite[Lemma 4.1]{attrac-speed}, based on the geometry of $\Cc_1(U).$

\begin{lemma}\label{le vitesse pluriharmonique}
There exists a constant $0<\lambda<1$ such that for any $R$ in $\Cc_1(U),$ $\phi$ in $\mathcal H$ and $n$ in $\Nb,$ we have
$$|\langle \Lambda^nR-\tau,\phi\rangle|\leq\lambda^n.$$
\end{lemma}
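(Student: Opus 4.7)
The plan is to work on the dual side: show that the adjoint operator $\Lambda^{\ast}\phi := d^{-s} f^{\ast}\phi$ preserves $\mathcal H$ and is in fact a strict contraction $\Lambda^{\ast}(\mathcal H)\subset\lambda\cdot \mathcal H$ for some $\lambda\in(0,1)$. The gain $\lambda<1$ will come from the strict trapping $f(U)\Subset U$ together with the pencil geometry provided by (HD). Since $\tau$ is $\Lambda$-invariant,
\[
\langle \Lambda^n R-\tau,\phi\rangle = \langle R-\tau,(\Lambda^{\ast})^n\phi\rangle,
\]
so once $\Lambda^{\ast}(\mathcal H)\subset \lambda\cdot \mathcal H$ is established, iteration together with the defining normalisation of $\mathcal H$ gives $|\langle \Lambda^n R-\tau,\phi\rangle|\leq\lambda^n$ at once.

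The qualitative invariance $\Lambda^{\ast}(\mathcal H)\subset \mathcal H$ is straightforward: $\Lambda^{\ast}$ commutes with $\ddc$, so $\ddc\Lambda^{\ast}\phi=0$; continuity is preserved by pullback under the holomorphic map $f$; and if $R\in\Cc_1(U)$ then $\Lambda R\in \Cc_1(f(U))\subset \Cc_1(U)$, whence $|\langle R-\tau,\Lambda^{\ast}\phi\rangle| = |\langle \Lambda R-\tau,\phi\rangle| \leq 1$. To upgrade this to a strict contraction one exploits the fact that the conjunction of $\ddc\phi=0$ and the oscillation bound against the large family $\Cc_1(U)$ forces $\mathcal H$ to be equicontinuous on every compact $K\Subset U$: via slicing along the pencil of lines through $I$ supplied by (HD), $\phi$ produces a harmonic function on each disc $I(x)\cap U$, to which a Harnack-type inequality applies. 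Hence $\mathcal H$ is relatively compact in $C^0(K)$ for every $K\Subset U$. The strict inclusion $f(U)\Subset U$ then translates, on the dual side, into the fact that $\Lambda^{\ast}\phi$ only needs to control its oscillation against currents supported in a properly smaller neighbourhood of $\Ac$; combining this smoothing with the slicing decomposition of $\Cc_1(U)$ yields a uniform gain $\lambda\in(0,1)$, and iterating produces $(\Lambda^{\ast})^n\phi\in\lambda^n\mathcal H$.

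The main obstacle is precisely extracting the genuine inequality $\lambda<1$: the easy invariance argument above only yields $\lambda\leq 1$. Strict contraction requires a compactness-plus-smoothing argument that genuinely exploits $f(U)\Subset U$ (not merely $f(U)\subset U$) together with the quantitative disc-by-disc control supplied by (HD). This is the analogue, for pluriharmonic test forms, of the spectral gap that $\Lambda$ enjoys on $\Cc_1(U)$ in the codimension-one attractor setting, and it is where all the geometric hypotheses of the theorem are needed; a softer argument using only preservation of positivity and mass would not produce a strict exponential rate.
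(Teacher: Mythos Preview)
The paper does not prove this lemma; it is quoted from \cite[Lemma 4.1]{attrac-speed}. Your reduction to showing $\Lambda^\ast\mathcal H\subset\lambda\,\mathcal H$ is the right framework, and your verification of the qualitative inclusion $\Lambda^\ast\mathcal H\subset\mathcal H$ is correct. The problem is the step where you extract a genuine $\lambda<1$: what you write there is not an argument but a list of ingredients, and one of them is wrong.

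Specifically, the claim that ``$\phi$ produces a harmonic function on each disc $I(x)\cap U$, to which a Harnack-type inequality applies'' does not make sense. Here $\phi$ is a real $(k-1,k-1)$-form and $I(x)$ is a complex line; the restriction of $\phi$ to $I(x)$ is identically zero when $k\geq 3$ and is a top-degree form when $k=2$, never a harmonic function. For the same reason the assertion that $\mathcal H$ is relatively compact in $C^0(K)$ is ill-posed: the elements of $\mathcal H$ are $(k-1,k-1)$-forms, not scalar functions, and no equicontinuity statement of that shape is available. Your own last paragraph concedes that the contraction is ``the main obstacle'' and that a ``softer argument'' does not work; what precedes it does not overcome that obstacle.

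The mechanism that actually produces $\lambda<1$ lives on the current side, not the form side. Under (HD) the fibrewise dilations $h_\theta$ (scaling by $\theta$ in each $I(x)\setminus I\simeq\Cb$) give a holomorphic family of maps $h_\theta\colon U\to U$ for $|\theta|\leq 1$ with $h_0$ the projection onto $L$ and $h_1=\id$. For any $R\in\Cc_1(U)$ the family $R_\theta:=(h_\theta)_\ast R$ is a \emph{structural disc} in $\Cc_1(U)$ with $R_0=[L]$, and because $\ddc\phi=0$ the pairing $\theta\mapsto\langle R_\theta,\phi\rangle$ is harmonic in $\theta$. The strict trapping $f(U)\Subset U$ means there is $\rho>1$ such that $h_\theta(f(U))\subset U$ for all $|\theta|\leq\rho$; hence every $S\in\Lambda(\Cc_1(U))\subset\Cc_1(f(U))$ sits on a structural disc of radius $\rho$. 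Applying the Schwarz--Harnack inequality to the harmonic function $\theta\mapsto\langle S_\theta-\tau_\theta,\phi\rangle$ on the disc of radius $\rho$ (it vanishes at $0$ since $S_0=\tau_0=[L]$, and is bounded since $S_\theta,\tau_\theta\in\Cc_1(U)$) bounds its value at $\theta=1$, namely $\langle\Lambda R-\tau,\phi\rangle$, by a constant $\lambda=\lambda(\rho)<1$ (after replacing $f$ by an iterate if necessary). This is the missing quantitative step; the harmonic function lives on the parameter disc of the structural family, not on the fibres $I(x)\cap U$.
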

It has the following corollaries.
\begin{corollary}\label{cor vitesse pluriharmonique}
There exist constants $C>0$ and $0<\lambda<1$ such that if $R\in\Cc_1(U)$ is a current with bounded potential and $x$ is a point outside $U'$ then
$$|u_{\Lambda^nR,\tau}(x)|\leq C\lambda^n.$$
\end{corollary}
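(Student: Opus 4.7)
The plan is to represent $u_{\Lambda^n R, \tau}(x)$, for $x \notin U'$, as a pairing $\langle \Lambda^n R - \tau, \phi_x\rangle$ against a test form lying, up to normalization, in the class $\mathcal{H}$ of Lemma \ref{le vitesse pluriharmonique}. Concretely, I would introduce a real $(k-1,k-1)$-current $\phi_x$ on $\Pb^k$ solving $\ddc \phi_x = \delta_x - \mu$, where $\mu = T^k$ is the equilibrium measure of $f$; such a $\phi_x$ exists because $\delta_x - \mu$ has vanishing integral and is therefore $\ddc$-exact on the K\"ahler manifold $\Pb^k$.

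The geometric input that makes the construction work is that $\supp(\mu)$ is disjoint from $\bar U$. This is natural for an attracting set: $\mu$ is hyperbolic with strictly positive Lyapunov exponents (Briend--Duval), whereas every orbit issued from $U$ is attracted to $\Ac$, so $\supp(\mu)$ cannot meet the attracting basin containing $\bar U$. Together with $x \notin \bar U'$, this ensures that $\phi_x|_U$ is smooth and $\ddc$-closed, i.e.\ pluriharmonic. As $x$ ranges over the compact set $\Pb^k \setminus U'$, the restrictions $\phi_x|_U$ are moreover uniformly bounded in sup-norm by some constant $M > 0$.

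Stokes' theorem on $\Pb^k$, combined with the normalization $\langle \mu, u_{\Lambda^n R, \tau}\rangle = 0$ built into the definition of the potential, then gives
\[
u_{\Lambda^n R, \tau}(x) \;=\; \langle \delta_x - \mu,\, u_{\Lambda^n R, \tau}\rangle \;=\; \langle \phi_x,\, \ddc u_{\Lambda^n R, \tau}\rangle \;=\; \langle \phi_x,\, \Lambda^n R - \tau\rangle.
\]
Since $\phi_x/(2M) \in \mathcal{H}$ (using $\|R' - \tau\| \leq 2$ for any $R' \in \Cc_1(U)$), Lemma \ref{le vitesse pluriharmonique} yields $|\langle \Lambda^n R - \tau, \phi_x\rangle| \leq 2M \lambda^n$, giving the corollary with $C = 2M$ independent of $x$ and $R$.

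The main delicate point is rigorously checking $\supp(\mu) \cap \bar U = \emptyset$. Should a direct citation of this fact not be convenient, a robust workaround is to replace $\mu$ by a smooth probability measure $\mu_0$ supported in $\Pb^k \setminus \bar U'$: the integration by parts then picks up a correction $\int u_{\Lambda^n R, \tau}\, d\mu_0$, which can itself be controlled by reapplying the same scheme at points of $\supp(\mu_0)$ and using the mean-zero property of the potentials to absorb the resulting errors into $O(\lambda^n)$.
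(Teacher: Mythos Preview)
Your proof follows essentially the same route as the paper's: construct a real $(s,s)$-current $\phi_x$ with $\ddc\phi_x=\delta_x-\mu$, verify that its restriction to $U$ lies (after rescaling) in $\mathcal H$, and apply Lemma~\ref{le vitesse pluriharmonique}. The only minor difference is in how the normalizing constant is obtained---the paper uses the identity $\langle R-\tau,\phi_x\rangle=u_{R,\tau}(x)$ together with compactness of the pluriharmonic family $\{u_{R,\tau}\}_{R\in\Cc_1(\overline U)}$ on $\Pb^k\setminus\overline U$, rather than a direct sup-norm bound on $\phi_x|_U$---and your explicit discussion of $\supp(\mu)\cap\overline U=\varnothing$ makes explicit a point the paper leaves implicit.
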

\begin{proof}
If $x\notin U'$ then there exists a real $(s,s)$-current $\phi_x$ which is smooth on $U'$ and such that $\ddc\phi_x=\delta_x-\mu.$ Moreover, there exists a constant $C>0,$ independent of $x,$ such that $|\langle R-\tau,\phi_x\rangle|\leq C$ for all $x$ outside $U'.$ Indeed,
\begin{equation}\label{eq formel}
\langle R-\tau,\phi_x\rangle=\langle \ddc u_{R,\tau},\phi_x\rangle=\langle u_{R,\tau},\ddc\phi_x\rangle=u_{R,\tau}(x)
\end{equation}
and $\{u_{R,\tau}\}_{R\in\Cc_1(\overline U)}$ is a compact family of pluriharmonic functions on $\Pb^k\setminus \overline U$ and then, it is uniformly bounded on $\Pb^k\setminus U'.$ Therefore, the restriction of $\phi_x/C$ to $U$ is a form in $\mathcal H$ and Lemma \ref{le vitesse pluriharmonique} implies that
$$|u_{\Lambda^nR,\tau}(x)|=|\langle \Lambda^nR-\tau,\phi_x\rangle|\leq C\lambda^n.$$
\end{proof}

\begin{remark}
The second equality in \eqref{eq formel} is a priori just a formal equality since both sides in the brackets involve currents. Anyway, in our case the singular locus of these currents are disjoint so it is easy, using regularization, to give a rigorous meaning to this equality. In the general case, it is related to the theory of super-potential of Dinh and Sibony.
\end{remark}

\begin{proof}[Proof of Theorem \ref{th pot continu}]
If $S$ is a current in $\Cc_1(U)$ with continuous potential then it is also the case for $\Lambda^nS,$ $\forall n\in\Nb.$ Hence $(u_{S,\Lambda^nS})_{n\in\Nb}$ is a sequence of continuous functions. We will show that it converges uniformly to $u_{S,\tau}$ which will be therefore continuous. Indeed, as $u_{S,\tau}=u_{S,\Lambda^nS}+u_{\Lambda^nS,\tau},$ $\forall n\in \Nb,$ we have to prove that $u_n:=u_{\Lambda^nS,\tau}$ converge uniformly to $0.$ Observe that $u_n=\Lambda^nu_0.$

By the definition of $\Lambda$ acting on functions, we have
$$|u_{n+1}(x)|=|\Lambda u_n(x)|=\frac{1}{d^{s}}\left|\sum_{y\in f^{-1}(x)\cap U'}u_n(y)+\sum_{y\in f^{-1}(x)\cap \Pb^k\setminus U'}u_n(y)\right|.$$
Since $f$ is of small topological degree on $\Ac,$ we can assume, possibly by exchanging $f$ by an iterate $f^N,$ that $f^{-1}(x)\cap U'$ has strictly less that $d^{s}$ points, $\forall x\in\Pb^k.$ This and Corollary \ref{cor vitesse pluriharmonique} give
$$|u_{n+1}(x)|\leq\frac{d^{s}-1}{d^{s}}\|u_n\|_\infty+\frac{d^k}{d^{s}}C\lambda^n.$$
Without loss of generality, we can assume that $(d^{s}-1)/d^{s}\leq\lambda<1$ and then obtain by induction that
$$\|u_{n+1}\|_\infty\leq \lambda^{n}(\|u_0\|_\infty+(n+1)dC).$$
As $\tau$ has bounded potential \cite{daurat}, $\|u_0\|_\infty$ is finite and therefore $(\|u_n\|_\infty)_{n\in\Nb}$ converges exponentially to zero. This finish the proof of the continuity of the potential of $\tau.$

To estimate the modulus of continuity, we assume now that $S\in\Cc_1(U)$ is a current which has Lipschitz continuous potential and such that the same holds for $\Lambda S.$ Up to replace $U$ by $f^{-1}(f(U)),$ we can choose $S=d^{-1}f^*R$ where $R\in\Cc_1(f(U))$ is smooth. If $v_n:=u_{\Lambda^nS,S}$ then it easy to check that $v_{n+1}=\Lambda v_n+v_1,$ $v_n-u_1=u_n$ and that $v_1$ is a Lipschitz continuous function. Our goal is to estimate the modulus of continuity of $u_1=u_{\tau,S}.$

By a result of Dinh and Sibony, see \cite[Lemma 4.3]{dinh:sib:equigreen}, there exist constants $r>0$ and $C_0>1$ such that for all $x,y\in \Pb^k$ if $\dist(x,y)<r$ then it is possible to order the sets $f^{-1}(x)=\{x_i\}_{1\leq i\leq d^k}$ and $f^{-1}(y)=\{y_i\}_{1\leq i\leq d^k}$ in such a way that
$$\dist(x_i,y_i)\leq C_0\dist(x,y)^{d^{-k}}.$$
It implies that there exists a constant $C_1>0$ such that for all $x,y\in \Pb^k$  and  all $n\in \Nb$
$$|v_n(x)-v_n(y)|\leq nd^{n} C_0^{\sum_{i=0}^{n-1}d^{-ik}}\dist(x,y)^{d^{-kn}}\leq nd^{n} C_1\dist(x,y)^{d^{-kn}}.$$
Indeed, this is clear for $n=1$ as $v_1$ is Lipschitz with a constant which can be assumed smaller than $C_0$ and $C_1.$ And if the statement holds for $v_n$ then if $\dist(x,y)<r$ we have
\begin{align*}|v_{n+1}(x)-v_{n+1}(y)|&\leq|\Lambda v_n(x)-\Lambda v_n(y)|+|v_1(x)-v_1(y)|\\
&\leq d^{-s}\sum_{j=1}^{d^k}\left|v_n(x_j)-v_n(y_j)\right|+C_1\dist(x,y)\\
&\leq d^{-s}\sum_{j=1}^{d^k}nd^{n}C_0^{\sum_{i=0}^{n-1}d^{-ik}}\dist(x_j,y_j)^{d^{-kn}}+C_1\dist(x,y)\\
&\leq nd^{n+1} C_0^{\sum_{i=0}^{n}d^{-ik}}\dist(x,y)^{d^{-k(n+1)}}+C_1\dist(x,y)\\
&\leq (n+1)d^{n+1} C_0^{\sum_{i=0}^{n}d^{-ik}}\dist(x,y)^{d^{-k(n+1)}}.
\end{align*}
The case where $\dist(x,y)\geq r$ comes from the fact that the functions $v_n=u_1+u_n$ are uniformly bounded since $u_1$ is continuous and $\|u_n\|_\infty\leq C\lambda^n.$

We deduce that for all $x,y\in \Pb^k$ 
\begin{align*}
|u_1(x)-u_1(y)|&\leq \underset{n\in \N}{\inf} \left( 2\|u_1-v_n\|_\infty+|v_n(x)-v_n(y)| \right)\\
&\leq \underset{n\in \N}{\inf} \left( 2 C\lambda^n +  nd^{n} C_1\dist(x,y)^{d^{-kn}} \right).
\end{align*}
Let  $x,y$ be in $\Pb^k$ such that $\dist(x,y)<<1$ then, if
$$n=\left\lfloor\dfrac{-\log (\sqrt{-\log(\dist(x,y))})}{\log(1/d^k)}\right\rfloor=\left\lfloor\dfrac{\log (-\log(\dist(x,y)))}{2k\log(d)}\right\rfloor,$$
we have 
$${d^{kn}}=e^{nk\log(d)}\leq e^{\frac{\log (-\log(\dist(x,y)))}{2k\log(d)}\times k\log(d)}= e^{\frac{1}{2}\log (-\log(\dist(x,y)))}=(-\log(\dist(x,y)))^{1/2}$$
so 
$$nd^{n}\dist(x,y)^{1/d^{kn}}\leq d^{kn}e^{\frac{\log(\dist(x,y))}{d^{kn}}}\leq \sqrt{-\log(\dist(x,y))}e^{-\sqrt{-\log(\dist(x,y))}}$$
and 
$$\lambda^n \leq e^{\log(\lambda)\left( \frac{\log (-\log(\dist(x,y)))}{2k\log(d)} -1 \right)}
\leq  \dfrac{(-\log\dist(x,y))^\frac{\log(\lambda)}{2k\log(d)}}{\lambda}.$$
If $\dist(x,y)$ is small enough then $$\sqrt{-\log(\dist(x,y))}e^{-\sqrt{-\log(\dist(x,y))}}\leq (-\log\dist(x,y))^\frac{\log(\lambda)}{2k\log(d)}.$$ As $u_1$ is bounded \cite{daurat}, there exists $A>0$ such that for all $x,y\in \Pb^k$
$$|u_1(x)-u_1(y)|\leq A \left|\,\log\dist(x,y) \,\right|^{-\alpha}$$
where $\alpha=\frac{-\log(\lambda)}{2k\log(d)}$.
This finish the proof of Theorem \ref{th pot continu}.
\end{proof}
\begin{remark}
Without (HD), similar arguments, using
$$|u_{\Lambda^nR,\tau}(x)|\leq C\ \ \ \forall x\not\in U'$$
instead of Corollary \ref{cor vitesse pluriharmonique}, give an alternative proof to the fact that $\tau$ has bounded potential.
\end{remark}
We can deduce from the proof of Theorem \ref{th pot continu} that under the same assumptions $\tau$ is the unique current in $\Cc_1(U)$ invariant by $\Lambda$ and with bounded potential. In fact, the boundedness assumption can be dropped and moreover, Theorem \ref{th speed} gives a uniform speed of equidistribution of any current in $\Cc_1(U)$ towards $\tau.$
\begin{proof}[Proof of Theorem \ref{th speed}]
Let $S$ be a current in $\Cc_1(U).$ The functions $u_n:=u_{\Lambda^nS,\tau}$ satisfy $\Lambda u_n=u_{n+1}.$ Observe that if $\phi$ is a $\mathcal C^2$ test form then
$$|\langle \Lambda^n S-\tau,\phi\rangle|=|\langle u_n,\ddc \phi\rangle|\leq \|u_n\|_{L^1(\Pb^k)}\|\phi\|_{\mathcal C^2}.$$
So, in order to prove the theorem, it is sufficient to obtain an estimate on $\|u_n\|_{L^1(\Pb^k)}.$

We now introduce some notations. First, notice that, as the constants below will not depend on $S,$ considering an iterate $f^N$ instead of $f$ does not affect the arguments. As in the proof of Theorem \ref{th pot continu} we can assume, possibly by exchanging $f$ by $f^N,$ that $f^{-1}(x)\cap U$ has strictly less that $d^s$ points, for all $x\in\Pb^k.$ Let $C$ and $\lambda$ be the constants of Corollary \ref{cor vitesse pluriharmonique} where, as above, we can assume that $(d^s-1)/d^s<\lambda<1.$ Finally, let $\alpha>0$ and $\delta>0$ be such that $\lambda<\delta<1$ and $(\alpha-1)\delta^n>Cdn\lambda^{n-1},$ for all $n\geq 0.$ 

Since
$$|u_{n+1}(x)|=|\Lambda u_n(x)|=\frac{1}{d^{s}}\left|\sum_{y\in f^{-1}(x)\cap U'}u_n(y)+\sum_{y\in f^{-1}(x)\cap \Pb^k\setminus U'}u_n(y)\right|,$$
in the same way than in the proof of Theorem \ref{th pot conti}, we obtain that
$$|u_{n+1}(x)|\leq \lambda \max_{y\in f^{-1}(x)}|u_n(y)|+Cd\lambda^n,$$
and by induction
$$|u_{n+1}(x)|\leq \lambda^{n+1}\max_{y\in f^{-(n+1)}(x)}|u_0(y)|+Cd(n+1)\lambda^n.$$
Therefore
$$\{|u_n|\geq \alpha\delta^n\}\subset f^n\left(\left\{|u_0|\geq \frac{\alpha\delta^n-Cdn\lambda^{n-1}}{\lambda^n}\right\}\right)\subset f^n\left(\left\{|u_0|\geq \frac{\delta^n}{\lambda^n}\right\}\right),$$
where the last inclusion comes from the choices of $\alpha$ and $\delta.$

On the other hand, the family $\Pc$ of functions $u$ such that $\langle \mu,u\rangle=0,$ $\ddc u=S-\tau$ with $S\in \Cc_1(\mathbb P^k)$ is a compact family of dsh functions. Therefore, by an inequality of Hörmander (it can be deduced from \cite[Theorem 4.4.5]{ho-book}), there exists a constant $a>0$ such that
$$\int_{\Pb^k}e^{a|u|}\omega^k\leq a^{-1},$$
for any $u\in\Pc.$ As a consequence of this inequality, we have
$$\textrm{Vol}\left(\left\{|u_0|\geq\frac{\delta^n}{\lambda^n}\right\}\right)\leq a^{-1}e^{-a\delta^{n}\lambda^{-n}},$$
since $u_0\in\Pc.$ And, by the change of variables formula, there exists a constant $M>0,$ which is essentially equal to $\|Jac(f)\|_\infty,$ such that 
$$\textrm{Vol}\left(f^n\left(\left\{|u_0|\geq\frac{\delta^n}{\lambda^n}\right\}\right)\right)\leq a^{-1}M^ne^{-a\delta^{n}\lambda^{-n}},$$
which implies
$$\textrm{Vol}\left(\{|u_n|\geq \alpha\delta^n\}\right)\leq a^{-1}M^ne^{-a\delta^{n}\lambda^{-n}},$$
as $\{|u_n|\geq \alpha\delta^n\}\subset f^n\left(\left\{|u_0|\geq \frac{\delta^n}{\lambda^n}\right\}\right).$

Hence, by Cauchy-Schwarz inequality
$$\|u_n\|_{L^1(\Pb^k)}\leq \textrm{Vol}(\{|u_n|\geq \alpha\delta^n\})^{1/2}\|u_n\|_{L^2(\Pb^k)}+\alpha\delta^n,$$
and then
$$\|u_n\|_{L^1(\Pb^k)}\leq a^{-1/2}M^{n/2}e^{-a\delta^{n}\lambda^{-n}/2}\|u_n\|_{L^2(\Pb^k)}+\alpha\delta^n\leq c\delta^n,$$
for a constant $c>0$ large enough since $\|u\|_{L^2(\Pb^k)}$ is uniformly bounded for $u\in\Pc$ and $\lambda<\delta.$
\end{proof}

\begin{remark}
Using interpolation between Banach spaces, see e.g. \cite{triebel-interpolation}, we can obtain that
$$|\langle \Lambda^nS-\tau,\phi\rangle|\leq c\delta^{\alpha n/2}\|\phi\|_{\mathcal C^\alpha},$$
for any test form $\phi$ of class $\mathcal C^\alpha,$ with $0<\alpha\leq2.$ Here, $c$ depends on $\alpha.$
\end{remark}

\section{Negative Lyapunov exponent} \label{section expo str négatif}
In this section, we consider an endomorphism $f$ of $\Pb^k$ of degree $d\geq 2$ with an attracting set $\Ac$ of codimension $1.$ We will study hyperbolicity properties of measures of entropy $s\log d$ on $\Ac.$

Let $\nu$ be such a measure and assume that $\nu$ is ergodic. We will use the ideas of de Thélin \cite{deT:expo} to obtain a relation between the Lyapunov exponents of $\nu$ and a certain ``topological degree''. Indeed, as the dynamics near an attracting set stay confined in a trapping region, we consider the following local version of topological degree. Let $V$ be a small neighborhood of $\supp(\nu)$ and define
$$d_{k,loc}:=\limsup_{n\to+\infty}\left(\int_V(f^n)^*(\omega^k)\right)^{1/n}.$$
We will now prove Theorem \ref{th hyp} which says that if $d_{k,loc}<d^s$ then $\nu$ is a hyperbolic measure.
\begin{remark}
As we said in the introduction, to be of small topological degree on $\Ac$ is much stronger than $d_{k,loc}<d^{s}$. Moreover, the assumption $d_{k,loc}<d^{s}$ holds in all examples known by the authors. We know neither if this assumption nor the conclusion on the hyperbolicity of $\nu$ always holds.

\end{remark}

\begin{proof}[Proof of Theorem \ref{th hyp}]
If $\log(\Jac f)\notin L^1(\nu)$ then $\chi_k=-\infty$ and we have the upper bound one $\chi_k$.
By Margulis-Ruelle inequality, we have $\chi_1>0$.
Thus $\chi_1\neq \chi_k$ and the lower bound $\chi_{k-1}\geq 2^{-1}\log d$ is a direct consequence of \cite[Theorem D]{dupont:entropy}.

So we assume that $\log(\Jac f)\in L^1(\nu)$. By  \cite{deT:expo}, we have $\chi_{k-1} >0$ and the lower bound $\chi_{k-1}\geq 2^{-1}\log d$ as long as we know that $\chi_k\leq 0.$

The proof of the upper bound on $\chi_k,$ follows closely the one of \cite{deT:expo}. We will recall the technical intermediary results he obtains 
and will only give all details when the assumption about the dynamical degree $d_{k,loc}$ appears.
 
 Actually, everything in the sequel, as in the proofs of \cite{deT:expo}, will take place in a fixed compact set $K\subset V.$

Let $\gamma>0$ be an arbitrary small constant. If $\gamma$ is small enough, there exist a compact $K\subset V$ containing a $\gamma$-neighboorhood of $\supp(\nu)$. Hereafter, $\delta>0$ will be a small constant depending on $\gamma.$ Recall that two points $x$ and $y$ in $\Pb^k$ are $(n,\delta)$-separated if $\dist_n(x,y)\geq\delta$ where $\dist_n(x,y):=\max_{0\leq i\leq n}\dist(f^i(x),f^i(y)).$ Using a theorem of Brin and Katok \cite{BK}, de Thélin constructs, for $n$ large enough, a set $E=\{x_i\}_{1\leq i\leq N}\subset \supp(\nu)$ 
such that
\begin{itemize}
\item $N\geq \frac{1}{2}e^{h(\nu)n-\gamma n},$
\item the points $x_i$ and $x_j$ are $(n,\delta)$-separated if $i\neq j.$
\end{itemize}
Then for each $x_i\in E,$ using Pesin theory and graph transform, he constructs an
 ``thick unstable manifold'' $W(x_i)\subset K,$ which in our case is an open subset, such that $f^n(x_i)\in W(x_i)$ and $\textrm{Vol}(W(x_i))\geq c^{-1}e^{2\chi_k^-n-c\gamma n}$ for some constant $c\geq 1$ independent of $n.$
With the terms of \cite{BLS}, $W(x_i)$ is the $n$th cut-off iterate of a Lyapunov chart.

Moreover, we can deduce from his construction that if $x_i\in E$ then there exists an open set $W_{-n}(x_i)\subset K$ which contains $x_i$ and such that 
\begin{itemize}
\item $f^n(W_{-n}(x_i))=W(x_i),$
\item the diameter of $f^l(W_{-n}(x_i))$ is less than $\delta/4$ for $l=0,\ldots,n.$
\end{itemize}
These two properties and the fact that the points $x_i$ are $(n,\delta)$-separated imply that the sets $W_{-n}(x_i)$ are pairwise disjoint. In particular, if, for $a\in K$, $n(a)$ denotes the number of points $x_i\in E$ such that $a\in W(x_i),$ then there exist at least $n(a)$ points in $f^{-n}(a)\cap V,$ i.e. $n(a)\leq (f^n)_*\textbf{1}_V(a).$

We can now use our assumption that $d_{k,loc}<d^s.$ The last observation implies that
\begin{equation}\label{eq2}
\int_K n(a)\omega^k\leq\int (f^n)_*(\textbf{1}_V)\omega^k=\int_V(f^n)^*(\omega^k).
\end{equation}
Moreover, by the definition of $d_{k,loc},$ for all $\varepsilon>0,$ we have for $n$ large enough
\begin{equation}\label{eq3}
\int_V(f^n)^*(\omega^k)\leq (d_{k,loc}+\varepsilon)^n.
\end{equation}
On the other hand, the estimates on $N$ and on $\textrm{Vol}(W(x_i))$ imply that
\begin{equation}\label{eq4}
\int_K n(a)\omega^k\geq \frac{1}{2c}e^{h(\nu)n+2\chi_k^--(c+1)\gamma n}.
\end{equation}
Hence, we deduce from the equations \eqref{eq2}, \eqref{eq3} and \eqref{eq4} that
$$(d_{k,loc}+\varepsilon)^n\geq \frac{1}{2c}e^{h(\nu)n+2\chi_k^--(c+1)\gamma n},$$
for $n$ large enough. As the constant $c$ is independent of $n,$ we obtain, when $n$ goes to infinity, that $h(\nu)\leq \log(d_{k,loc}+\varepsilon)+(c+1)\gamma-2\chi_k^-.$ But $\varepsilon>0$ and $\gamma>0$ are arbitrary small, hence $h(\nu)\leq \log(d_{k,loc})-2\chi_k^-,$ which was the desired result.
\end{proof}

\bibliographystyle{alpha}

\noindent
Université Paris-Est Marne-La vallée, LAMA, UMR 8050,
77454 Marne-la-Vallée Cedex 2, France.\\
{\tt  e-mail: Sandrine.Daurat-goguet@u-pem.fr}
\vspace{0,5cm}
\\
Université de Bourgogne, IMB, UMR 5584, 21078 Dijon Cedex, France.\\
{\tt e-mail: johan.taflin@u-bourgogne.fr}
\end{document}